\newtheorem{theorem}{Theorem}[section]
\newtheorem{question}[theorem]{Question}
\newtheorem{lemma}[theorem]{Lemma}
\newtheorem{prop}[theorem]{Proposition}
\newtheorem{example}[theorem]{Example}
\newtheorem*{conj1'}{Conjecture 1'}
\theoremstyle{definition}
\newtheorem{defn}[theorem]{Definition}
\newtheorem{remark}[theorem]{Remark}
\newcommand{\R}{\mathbb{R}}
\newcommand{\Z}{{\mathbb Z}}
\newcommand{\N}{{\mathbb N}}
\newcommand{\vre}{\varepsilon}
\newcommand{\proj}{\operatorname{proj}}
\newcommand{\spn}{\operatorname{span}}
\newcommand{\im}{\operatorname{im}}
\newcommand{\orb}{\operatorname{orb}}
\newcommand{\ignore}[1] {}
\newcommand{\df}{{\, \stackrel{\mathrm{def}}{=}\, }}
\newcommand{\rst}[1]{\ensuremath{{\mathbin\upharpoonright}%
\raise-.5ex\hbox{$#1$}}}
\title{Finite Orbits in Random Subshifts of Finite Type}
\author{Ryan Broderick}
\address{340 Rowland Hall (Bldg. \#400), University of California, Irvine, Irvine, CA 92697-3875, USA}
\email{broderir@math.uci.edu}
\begin{document}
\begin{abstract} 
For each $n, d \in \N$ 
and $0 < \alpha < 1$, we define a random subset of $\mathcal{A}^{\{1, 2, \dots, n\}^d}$
by independently including each element with probability $\alpha$
and excluding it with probability $1-\alpha$, and consider the associated random subshift of finite type.
Extending results of McGoff \cite{M} and of McGoff and Pavlov \cite{MP}, we prove
there exists $\alpha_0 = \alpha(d, |\mathcal{A}|) > 0$ such
that for $\alpha < \alpha_0$ and with probability tending to $1$ as $n \to \infty$, 
this random subshift will contain only finitely many elements.
In the case $d = 1$, we obtain the best possible such $\alpha_0$, $1/|\mathcal{A}|$.
\end{abstract} 

\maketitle

\section{introduction}

Fix a finite color set $\mathcal{A}$ with at least two elements.
Let $\Omega_n^d$ be the power set of $\mathcal{A}^{\{1,2,\dots, n\}^d}$.
Given $\omega \in \Omega_n^d$, let $X_\omega$ be the set of colorings $\eta \in \mathcal{A}^{\Z^d}$
such that for every $\mathbf{j} \in \Z^d$ there exists $\beta \in \omega$ such that
$\eta(\mathbf{i} + \mathbf{j}) = \beta(\mathbf{i})$ for all $\mathbf{i} \in \{1,2,\dots, n\}^d$.
That is, $X_\omega$ is the $\Z^d$-subshift of finite type that has $\omega$ as its set of allowed patterns.
For convenience, we will abuse notation slightly and, when 
$\gamma_1 \colon D \to \mathcal{A}$ and $\gamma_2 \colon D + \mathbf{j} \to \mathcal{A}$
satisfy $\gamma_1(\mathbf{i}) = \gamma_2(\mathbf{i} + \mathbf{j})$ for all $\mathbf{i} \in D$,
we will say $\gamma_1 = \gamma_2$.
Thus, $\eta \in X_\omega$ if and only if each of its restrictions to a subset 
of the form $\mathbf{j} + \{1,2,\dots, n\}^d$ is equal to an element of $\omega$.
We will also say that a coloring $\gamma \colon B \to \mathcal{A}$ (where $B \subset \Z^d$)
is \emph{$\omega$-legal} if for each $\mathbf{j} \in \Z^d$ such that $\{1,2,\dots, n\}^d + \mathbf{j} \subset B$,
$\gamma|_{\{1,2, \dots, n\}^d + \mathbf{j}} \in \omega$. Note that this is weaker than requiring
that $\gamma = \eta|_{B}$ for some $\eta \in X_\omega$, as it is possible that 
$\gamma$ is $\omega$-legal but cannot be extended to an $\omega$-legal coloring of $\Z^d$.

Note that if $\omega = \mathcal{A}^{\{1,2,\dots, n\}^d}$, then $X_\omega$ is the full shift $\mathcal{A}^{\Z^d}$.
If $\omega = \varnothing$, then $X_\omega$ is empty, but it also possible for $X_\omega$ to be empty
when $\omega$ is nonempty. For example, let $d = 1$, $n = 2$, $\mathcal{A} = \{0, 1\}$, and 
$\omega = \{\beta\}$, where $\beta(1) = 0$ and $\beta(2) = 1$. Then if there exists $\eta \in X_\omega$,
$\eta(1) \neq 1$, since otherwise $\eta|_{\{1,2\}} \neq \beta$, and $\eta(1) \neq 0$, since otherwise
$\eta|_{\{0,1\}} \neq \beta$. But $\eta(1) \in \mathcal{A} = \{0,1\}$, so we have a contradiction
and $X_\omega$ is empty.

Put the usual topology on the full $\Z^d$-shift via the metric
$$\rho(\eta_1, \eta_2) = 2^{-\min\{\|\mathbf{m}\| \colon \eta_1(\mathbf{m}) \neq \eta_2(\mathbf{m})\}}.$$
Then put the subspace topology on each subshift $X_\omega$. Thus, together
with the translations $T^{\mathbf{n}} \colon X_\omega \to X_\omega$ given
by $T^{\mathbf{n}}(\eta)(\mathbf{i}) = \eta(\mathbf{i} + \mathbf{n})$,
$X_\omega$ is a $\Z^d$-topological dynamical system and we may study dynamical properties
such as periodicity, entropy, and directional entropy.

We will study these properties for \emph{random} subshifts in the following sense.
For each $d, n \in \N$ and $0 < \alpha < 1$, define a probability measure
$\mu_{\alpha, n}$ on $\Omega_n^d$ by independently
including each element of $\mathcal{A}^{\{1,2, \dots, n\}^d}$ with probability $\alpha$,
and excluding it with probability $1-\alpha$. Thus, for any $\omega \in \Omega_n^d$,
$\mu_{\alpha, n}(\{\omega\}) = \alpha^{|\omega|}(1-\alpha)^{|\mathcal{A}|^{n^d}-|\omega|}$.
Here and throughout the paper, $|\cdot|$ is used to denote the cardinality of a finite set.
Note that $\mu_{\alpha, n}$ also depends on $d$ and $\mathcal{A}$, but we suppress these
in order to keep the notation manageable.

Notice that the number of $\omega$-legal colorings of $\{1, 2, \dots, n\}^d$ has binomial distribution with parameters $|\mathcal{A}|^{n^d}$
and $\alpha$. Thus, using Chernoff's inequality, it is easy to see that for any $\alpha \in (0,1)$ and any $0 < \beta < \alpha$,
there exists $c > 0$ such that the number of legal colorings of $\{1, 2, \dots, n\}^{d}$ is at least $\beta|\mathcal{A}|^{n^d}$
with probability greater than $1 - e^{-c|\mathcal{A}|^{n^d}}$.
One might expect that the abundance of legal blocks leads to positive entropy, but
it was shown by K.\ McGoff in \cite{M} 
that if $d=1$, then $\frac{1}{|\mathcal{A}|}$ is the critical value for $\alpha$ in the sense
that the probability of $X_\omega$ having positive entropy tends to $0$ as $n$ tends to $\infty$ if $\alpha < 1/|\mathcal{A}|$ and tends to $1$ if $\alpha > 1/|\mathcal{A}|$. K.\ McGoff and R.\ Pavlov proved
a weaker version of this result for $d > 1$, which implies that if $\alpha \le 1/|\mathcal{A}|$,
then for every $\vre > 0$ the probability that $X_\omega$ has entropy at least $\vre$ tends to $0$.
So if $\alpha$ is small and $n$ is large, then the entropy of $X_\omega$ is, with very high probability,
a small positive number or zero. Their result does not distinguish between these two possibilities though
so it is natural to ask about the probability that the entropy is positive in the case $d > 1$.
Furthermore, for all $d \in \N$ it is possible for $X_\omega$ to have zero entropy, while still
containing aperiodic elements\footnote{For example, let 
$\omega = \{\beta_1, \beta_2, \beta_3\} \in \Omega_2^1$
where $\beta_1(1) = 0, \beta_1(2) = 0, \beta_2(1) = 0, \beta_2(2) = 1, \beta_3(1) = 1, \beta_3(2) = 1$.}.
(Here and throughout, we say a coloring $\eta \colon \Z^d \to \mathcal{A}$ 
is \emph{aperiodic} if it has no period vectors, i.e. there does not exist $\mathbf{p} \in \Z^d \setminus \{\mathbf{0}\}$
such that $\eta(\mathbf{j} + \mathbf{p}) = \eta(\mathbf{j})$ for all $\mathbf{j} \in \mathbf{Z}^d$.)
 Thus, it is also natural to investigate the likelihood that $X_\omega$ has at least one period vector, as well as the likelihood of
the stronger condition that $|X_\omega|$ is finite. 
We are able to 
determine both the entropy and finiteness 
for small enough $\alpha$ by showing the following.
\begin{theorem}
\label{aperiodic}
For any $d, |\mathcal{A}| \in \N$, there exists $\alpha_0 = \alpha_0(d, |\mathcal{A}|) > 0$ such that for $0 < \alpha < \alpha_0$, 
$$\lim_{n\to \infty} \mu_{\alpha,n}(|X_\omega| < \infty) = 1.$$
In the case $d = 1$, we can take $\alpha_0(1, |\mathcal{A}|) = \frac{1}{|\mathcal{A}|}$.
\end{theorem}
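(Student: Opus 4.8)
The plan is to complement the abundance of legal blocks with the observation that finiteness of $X_\omega$ is really a statement about the recurrent structure of legal configurations, and to bound the probability of the (rare) event $\{|X_\omega| = \infty\}$ by a first-moment argument. Write $q = |\mathcal{A}|$. I would treat $d=1$ first, setting up the de Bruijn graph $B$ whose vertices are the words in $\mathcal{A}^{\{1,\dots,n-1\}}$ and whose edges are the words in $\mathcal{A}^{\{1,\dots,n\}}$, an edge $a_1\cdots a_n$ joining $a_1\cdots a_{n-1}$ to $a_2\cdots a_n$. Let $G\subseteq B$ be the random subgraph retaining exactly the edges lying in $\omega$; then the points of $X_\omega$ are precisely the bi-infinite walks in $G$. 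The first step is a purely graph-theoretic lemma: $X_\omega$ is infinite if and only if $G$ contains two distinct simple cycles $C_1\neq C_2$ together with a directed path from a vertex of $C_1$ to a vertex of $C_2$. One direction is clear once one notes that such data produce the non-periodic bi-infinite walk $\cdots C_1 C_1 P C_2 C_2\cdots$, whose orbit is infinite; the converse follows by analyzing the condensation of $G$ into strongly connected components and checking that, absent such a configuration, every bi-infinite walk must remain inside a single simple cycle and hence be periodic, leaving only finitely many points.

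Thus $\mu_{\alpha,n}(|X_\omega|=\infty)$ is at most the expected number of such ``witness'' subgraphs $S=C_1\cup P\cup C_2$. A fixed $S$ occurs with probability $\alpha^{|S|}$, where $|S|$ is its number of edges, equivalently its number of distinct $n$-blocks, so the task reduces to showing $\sum_S \alpha^{|S|}\to 0$. Here the decisive feature of $B$ is that every vertex has out-degree exactly $q$, so the number of walks of a given length $\ell$ from a fixed vertex is exactly $q^\ell$; the cost $\alpha^\ell$ of demanding that all their blocks be legal then combines into the geometric weight $(q\alpha)^\ell$, which is summable precisely when $q\alpha<1$, i.e. $\alpha<1/q$. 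This is the source of the sharp threshold. To obtain decay in $n$ I would invoke a Fine--Wilf argument to show that any witness must contain $\Omega(n)$ edges: two distinct simple cycles cannot both be short, since a cycle of length $\ell\le n-1$ is determined by any single length-$(n-1)$ window it meets, and a path joining two genuinely different short cycles must be long enough to break their incompatible periodicities. Organizing the sum so that each witness is weighted by $\alpha$ to its true number of distinct blocks, one then gets $\sum_S \alpha^{|S|}\le C\,(q\alpha)^{cn}\to 0$ for $\alpha<1/q$, whence $\mu_{\alpha,n}(|X_\omega|<\infty)\to 1$.

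For general $d$ the same philosophy applies, but there is no clean transfer-matrix picture and periodicity in $\Z^d$ is far more delicate, so I would aim only for the existence of some $\alpha_0>0$ and permit lossy counting. The plan is to define a finite witness pattern certifying $|X_\omega|=\infty$: by compactness an infinite $X_\omega$ contains a point with infinite orbit, from which one extracts a finite $\omega$-legal pattern that either admits two distinct legal continuations across some hyperplane or cannot be completed to any fully periodic configuration. A first-moment bound then applies: a witness occupying a region of volume $V$ and involving $w$ distinct $n$-blocks has probability at most $\alpha^{w}$, while the number of candidate witnesses of that size is at most $|\mathcal{A}|^{O(V)}\le|\mathcal{A}|^{O(w)}$ by the bounded local branching of legal patterns, so for $\alpha$ below a dimension-dependent constant $\alpha_0(d,q)$ the weights again form a convergent geometric series whose tail tends to $0$ as the forced witness size grows with $n$.

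I expect the main obstacle to be precisely this combinatorial geometry in the case $d\ge 2$: identifying a finite certificate of infiniteness and, crucially, lower-bounding the number $w$ of distinct $n$-blocks it must contain relative to the number of ways it can be placed and filled in. In dimension one this is controlled exactly by the de Bruijn out-degree and by Fine--Wilf; in higher dimensions the analogous rigidity statements, to the effect that few distinct $n$-blocks force strong periodicity, are genuinely harder, and this is what prevents the argument from recovering the sharp value $1/|\mathcal{A}|$.
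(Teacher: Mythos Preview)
For $d=1$ your de Bruijn--graph approach is a genuine alternative to the paper's argument, which instead invokes McGoff's theorem (that $\alpha<1/|\mathcal{A}|$ forces zero entropy with high probability) and then shows that a large-period or aperiodic point in $X_\omega$ would either produce positive entropy or fall into a directly countable rare event. Your characterization of $|X_\omega|=\infty$ via two distinct simple cycles joined by a directed path is correct, and the spectral identity $\operatorname{tr}(A^\ell)=q^\ell$ for the de Bruijn adjacency matrix does neutralize the apparent $q^{n-1}$ vertex factor in the first-moment sum. Two places where the sketch is thin: the Fine--Wilf lower bound needs an actual case analysis (two short simple cycles are necessarily vertex-disjoint, since a cycle of length $\ell\le n-1$ through $v$ is determined by the minimal period of $v$; one must then argue that the connecting path is long), and when $C_1,P,C_2$ share edges the correct weight is $\alpha^{|E(C_1\cup P\cup C_2)|}$ rather than $\alpha^{\ell_1+m+\ell_2}$, so you must enumerate by distinct-edge count, e.g.\ by restricting to minimal theta/dumbbell subgraphs. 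Your route is more self-contained; the paper's buys the sharp constant without redoing McGoff's work.

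For $d\ge 2$ there is a real gap. Your proposed witness---a finite $\omega$-legal pattern that ``admits two distinct legal continuations'' or ``cannot be completed to any fully periodic configuration''---is not a property of the pattern alone: both clauses depend on all of $\omega$, so the event does not factor as $\alpha^w$ and no first-moment bound of the form $\sum_W\alpha^{|W|}$ is available. More fundamentally, in $d\ge 2$ aperiodic SFTs exist, so infiniteness of $X_\omega$ need not be certified by any finite local obstruction at all. The paper sidesteps this by proving the complementary statement directly: a first-moment bound (on colorings of a box of side $(1+2\varepsilon)n$ having many distinct $n$-subblocks) shows that with high probability every legal box carries $d$ short, nearly-orthogonal local period vectors, and then a purely deterministic local-to-global argument propagates these to all of $\Z^d$. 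Your asserted inequality $|\mathcal{A}|^{O(V)}\le|\mathcal{A}|^{O(w)}$ is exactly the missing ingredient: controlling the number of colorings by the number of distinct $n$-blocks \emph{is} local periodicity, which the paper establishes rather than assumes.
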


\begin{remark}
\label{bounded-period=finite}
Clearly, $X_\omega$ is finite if and only if there exists $p \in \N$ such that
every $\eta \in X_\omega$ is periodic in each cardinal direction with period less than $p$.
\end{remark}

\noindent We will prove that this latter property holds with probability tending to $1$ when $0 < \alpha < \alpha_0$.

Since the entropy of a finite subshift is clearly zero, the aforementioned result of McGoff implies that
the $\alpha_0$ in Theorem~\ref{aperiodic} is optimal in the case $d=1$.
For $d > 1$, our $\alpha_0$ will be below the critical value of $1/|\mathcal{A}|$, which leaves a 
gap in the parameter space. This leads to several open questions, which we discuss in \S\ref{entropy}.

The paper is organized as follows: In \S\ref{dimone} we prove the $d=1$ case of
Theorem~\ref{aperiodic}. The argument in this case is different from the one for arbitrary dimension
and provides a larger $\alpha_0$ than the general argument allows us to obtain.
In \S\ref{higherdim} we present the proof of Theorem~\ref{aperiodic} for $d\in \N$ arbitrary.
Finally, in \S\ref{entropy} we discuss further directions and open questions.

\ignore{
\begin{lemma}
\label{distinct}
For $\vre > 0$, let $D_\vre$ be the set of $\omega$ for which there is an $\omega$-legal coloring $\beta$ of $[1,(1+\vre)n]^d$
such that the restrictions $\beta|_{(i_1, \dots, i_d) + [1, n]^d}$ are distinct for $0 \le i_j \le \lfloor \vre n\rfloor$.
Then for sufficiently small $\alpha$, $\lim_{n\to \infty} \mu_\alpha(D_\vre) = 0$.
\end{lemma}

\begin{proof}
Let $A$ denote the set of $\beta \colon [1, (1+\vre)n]^d \to \mathcal{A}$
such that the restrictions $\beta|_{(i_1, \dots, i_d) + [1, n]^d}$ are distinct for $0 \le i_j \le \lfloor \vre n\rfloor$.
Given $\beta \in A$,
let $D_\vre(\beta)$ denote the set of $\omega \in \Omega$ such that $\beta$ is $\omega$-legal.
Since there are at least $(\vre n)^d$ distinct colorings of the form $\beta|_{(i_1, \dots, i_d) + [1, n]^d}$,
all of which must be contained in $\omega$, we see that $\mu_{\alpha}(D_\vre(\beta)) \le \alpha^{\vre^d n^d}$.
Since $D_\vre = \bigcup_{\beta \in A} D_\vre(\beta)$, we have
$$\mu_\alpha(D_\vre) \le \sum_{\beta \in A} \mu_\alpha(D_\vre (\beta)) \le
			|A| \alpha^{\vre^d n^d} \le |\mathcal{A}^{[(1+\vre)n]^d}| (\alpha^{\vre^d})^{n^d} = (\alpha^{\vre^d}|\mathcal{A}|^{(1+\vre)^d})^{n^d}.$$
If $\alpha < |\mathcal{A}|^{-\left(\frac{1+\vre}{\vre}\right)^d}$, then the righthand side tends to $0$, which completes the proof.
\end{proof}

This lemma is all that is required to prove Proposition \ref{aperiodic} in the case $d=1$. We present this argument below
to illuminate the main idea of the proof before proceeding to the slightly more technically higher-dimensional cases.

\ignore{
\begin{lemma}
\label{distinct}
For a convex finite sets $J \subset \Z^d$, let $D_{J, n}$ denote the
set of $\omgea \in \Omega_n$ for which there exists an $\omega$-legal coloring $\beta$ of $[1,n]^d + J$ such that
$\beta|_{[1,n]^d + \mathbf{j}_1}$ and $\beta|_{[1,n]^d +\mathbf{j}_2}$ are distinct for any distinct $\mathbf{j}_1, \mathbf{j}_2 \in J$.
If $J_n$ is a sequence of finite, convex sets with $|J_n| to \infty$, then

\end{lemma}
}

\begin{proof}[Proof of Proposition \ref{aperiodic} for $d = 1$]
Fix $0 < \vre < 1/2$ and let $\alpha < |\mathcal{A}|^{-1/\vre}$.
By Lemma \ref{distinct}, we may assume that for any $\omega$-legal coloring $\eta$ of $[1, n + \vre n]$,
there exists $1 \le i \le \vre n$ such that $\eta(j) = \eta(i + j)$ for all $1 \le j \le n$.
This implies that every legal coloring of $[1, n + \vre n]$ contains an interval of length $n$ on which it is periodic 
of period $p \le \vre n$. Let $\eta \in X_\omega$. By the above there is some $0 \le i \le \vre n$
such that $\eta|_{[i, i+ n]}$ is periodic of period $p \le \vre n$. 
If $\eta$ is aperiodic then either $\eta|_{[i,\infty)}$ or $\eta|_{(-\infty, i +n]}$ fails to be periodic of period $p$.
Without loss of generality assume it is the former and 
let $j_0 = \min\{j > i+n : \eta(j - p) \neq \eta(j)\}$.
Now, for every $j_0 - n \le k \le j_0 - p - 1$, $\min \{ 1 \le j \le n \colon \eta(k + j - p) \neq \eta(k+j)\} = j_0 - k$,
so the colorings $\eta|_{[k + 1, k+n]}$ must be distinct for each $j_0-n \le k \le j_0 - p- 1$.
Since $\vre < 1/2$, $(-p-1) - (-n) > n/2$ for large $n$.
It follows that if $X_\omega$ contains an aperiodic coloring, then $\omega \in D_\vre$, so since $0 < \vre < 1/2$
was arbitrary the proposition follows
by Lemma \ref{distinct}.
\end{proof}
}

\section{The case $d =1$}
\label{dimone}

To prove the theorem in the case $d=1$ (and obtain the optimal value of $\alpha_0$ in this case),
we will use the above-mentioned result that for $\alpha < |\mathcal{A}|^{-1}$,
the probability that $X_\omega$ has positive entropy tends to $0$ as $n \to \infty$.

In our context, topological entropy can be defined 
using the notion of complexity.
Given a finite set $K \subset \Z^d$, define the complexity of $K$ with respect to $\omega$
to be $P_\omega(K) \df |\{ \eta |_{K} \colon \eta \in X_\omega\}|$.
In the case that $K = \Z^d \cap ([1, k_1] \times [1,k_2] \times \dots \times [1,k_d])$,
we will write $P_\omega(K) = P_\omega(k_1, k_2, \dots, k_d)$
and in the case $K = \Z^d \cap [1,k]^d$, we will write $P_\omega(K) = P_\omega(k)$.

\begin{defn}
If $\omega \in \Omega_n^d$, the (topological) \emph{entropy} of the system $X_\omega$ is
$$h(X_\omega) = \lim_{k \to \infty} \dfrac{\log(P_\omega(k))}{k^d}.$$
\end{defn}

\noindent It is straightforward to check that this definition coincides with the general definition
of topological entropy when $X_\omega$ is endowed with a topology as described in the introduction.

\begin{proof}[Proof of Theorem \ref{aperiodic} for $d = 1$]
Let $d = 1$ and $\alpha < \frac{1}{|\mathcal{A}|}$.
By remark \ref{bounded-period=finite}, it suffices to show that, with probability tending to $1$,
every element of $X_\omega$ is periodic with period at most $n$.
We prove this in two steps: First we show that $X_\omega$ will, with probability tending to $1$, 
contain no periodic colorings with period greater than $n-1$; then we show that the existence of an aperiodic coloring
would imply, with probability tending to $1$, the existence of a periodic coloring of large period, completing the proof.
Let $\omega \in \Omega_n^1$ and suppose $\eta \in X_\omega$ is periodic with minimal period $p \ge n$.
We first claim that there exist $\ell \ge n$ and $j_0 \in \Z$ such that
\begin{enumerate}
	\item $\eta(j_0 + j + \ell) = \eta(j_0 + j)$ for each $1 \le j \le n$\label{mingap1}
	\item For any $1 \le \ell' < \ell$ there exists $1 \le j \le n$ such that $\eta(j_0 + j + \ell') \neq \eta(j_0 + j )$.\label{mingap2}
\end{enumerate}
To see this, for each $j_0 \in \Z$ let $p(j_0) \le p$ be the smallest natural number such that
$\eta(j_0 + j + p(j_0)) = \eta(j_0 + j)$ for each $1 \le j \le n$. Choose $j_0 \in \Z $ that maximizes $p(j_0)$
and assume for contradiciton that $p(j_0) < n$.
If $\eta$ is periodic with period $p(j_0)$, then $p(j_0) = p \ge n$ and we are done.
Otherwise, there exists $j_1 \neq j_0$ such that $p(j_1) < p(j_0)$.
Without loss of generality we may assume $j_1 > j_0$ and, by renaming $j_0$ if necessary,
that $j_1 = j_0 + 1$. Let $p_0 = p(j_0)$ and $p_1 = p(j_1) = p(j_0 + 1)$. 
Then for $1 \le j \le n$, we have $\eta(j_0 + j) = \eta(j_0 + p_0 + j)$ 
and $\eta(j_0 + 1 + j) = \eta(j_0 + 1 + p_1 + j)$. Hence, $\eta(j_0 + j) = \eta(j_0 + p_1 + j)$ for $2 \le j \le n$.
Furthermore, since $p_1 < p_0 < n$, we have 
$$\eta(j_0 + 1 + p_1) = \eta(j_0 + 1 + p_0 + p_1) = \eta(j_0 + 1 + p_0) = \eta(j_0 + 1),$$
and hence $\eta(j_0 + j) = \eta(j_0 + j + p_1)$ for $1 \le j \le n$, contradicting the minimality of $p(j_0)$.
Thus, there exists $j_0 \in \Z$ satisfying conditions (\ref{mingap1}) and (\ref{mingap2}) above.

Now we split into two cases, and show that either possibility has probability approaching $0$ as $n \to \infty$.
\begin{enumerate}
\item[{\bf Case 1}:] \emph{There are no integers $j_0 \le j_1 < j_2 \le j_0 + \ell-1$ such that 
$\eta(j_1 + j) = \eta(j_2 + j)$ for each $1 \le j \le n$.}

In this case, we have that there exists an $\omega$-legal coloring $\alpha$ of $\{1, 2, \dots, \ell + n\}$
such that the colorings $\alpha|_{\{j+1, \dots, j+n\}}$ ($j = 0, 1, \dots, \ell-1$) are all distinct
and $\alpha(j) = \alpha(\ell+j)$ for each $1 \le j \le n$.
But for each $\ell$ there are at most $|\mathcal{A}|^{\ell}$ such colorings, each with probability at most 
$\alpha^{\ell}$, so the probability that such a coloring exists for some $\ell\ge n$ is at most 
$$\sum_{\ell = n}^\infty \alpha^{\ell}|\mathcal{A}|^{\ell} 
				= \dfrac{(\alpha|\mathcal{A}|)^{n}}{1-\alpha|\mathcal{A}|},$$
which tends to $0$.
\item[{\bf Case 2}:] \emph{There exist $j_0 \le j_1 < j_2 \le j_0 + \ell - 1$
						such that $\eta(j_1 + j) = \eta(j_2 + j)$ for each $1 \le j \le n$.}
						
In this case, we claim that $X_\omega$ must have positive entropy.
To see this, let $\beta = \eta|_{\{j_0 + 1,j_0 +  2, \dots, j_0 + \ell + n\}}$, and define 
$$\gamma : \{j_0 + 1, j_0 + 2, \dots, j_0 + \ell + n - (j_2 - j_1)\} \to \mathcal{A}$$
via $\gamma(j) = \eta(j)$ for $j_0 + 1 \le j \le j_1+n$ and $\gamma(j) = \eta(j + (j_2-j_1))$ for 
$j_1+n < j \le \ell + n - (j_2 - j_1)$.
Then $\gamma$ is also $\omega$-legal, and furthermore any coloring ``stitched together'' out of translates of $\beta$ and $\gamma$
is also $\omega$-legal. That is, given any sequence $\mathbf{k} = (k_i)_{i \in \Z}$ with the property that
for all $i \in \Z$, $k_{i+1} - k_i = g_i \in \{\ell, \ell - (j_2 - j_1)\}$, we may define an $\omega$-legal coloring
$\eta_{\mathbf{k}} : \Z \to \mathcal{A}$ in the following way: For $k_i < j \le k_{i+1}$,
let $\eta_{\mathbf{k}}(j) = \beta(j - k_i)$ if $g_i = \ell$ and let $\eta_{\mathbf{k}}(j) = \gamma(j - k_i)$ if $g_i = \ell - (j_2 - j_1)$.
Note that by our choice of $j_0$ and $\ell$, for each $L > \ell$ 
the set of $j_3 \in \{k_0 + 1, \dots, k_0 + L\}$
such that $\eta|_{\mathbf{k}}(j_3 + j) = \eta(j_0 + j)$ for $j \in \{1, 2, \dots, n\}$
is exactly $I(\mathbf{k}, L) = \{k_i \colon k_0 < k_i \le k_0 + L \}$.
Hence, if $\mathbf{k}$ and $\mathbf{k}'$ are two sequences as above which also satisfy
$k_0 = k_0' = 0$, then
$\eta|_{\mathbf{k}}$ and $\eta_{\mathbf{k}'}$ have distinct
restrictions to $\{1, \dots, L\}$ whenever $I(\mathbf{k}, L) \neq I(\mathbf{k}', L)$.
Since there are at least $2^{\lfloor L/\ell \rfloor}$
finite sequences $0 = k_0 < k_1 <  \dots < k_m$ which satisfy $k_{i+1} - k_i \in \{\ell, \ell - (j_2 - j_1)\}$
and $\sum_{i=1}^m (k_{i+1} - k_i) \le L$,
it follows that $|\{\eta|_{\{1, 2, \dots, L\}} \colon \eta \in X_\omega \}| \ge 2^{\lfloor L/\ell \rfloor}$,
so the entropy of $X_\omega$ is positive. By \cite{M}, the probability of this event tends to $0$.
\end{enumerate}

Hence, with probability tending to $1$, $X_\omega$ does not contain any periodic colorings of period at least $n$.
But if $X_\omega$ contains an aperiodic coloring $\eta$ then there must exist $j_0 \in \Z$
such that $\inf\{j_1 \colon \eta(j_0 + j) = \eta(j_1 + j)\text{ for each }1 \le j \le n\} \ge j_0 + n$.
If this infimum is a finite integer $j_1$, 
then we obtain a periodic coloring in $X_\omega$ with period at least $n$
by setting $\eta'((j_1-j_0)m + j) = \eta(j_0 + j)$ for each $m \in \Z$ and $1 \le j \le (j_1 - j_0)$,
so by the above we may assume the infimum is infinite for each such $j_0$. 
Since there are only finitely many colorings of $\{1, 2, \dots, n\}$,
there is a maximal such $j_0$, which implies that $\eta|_{\{j_0 + 1, \dots\}}$ is periodic. 
Since this induces a periodic element of $X_\omega$,
we may assume the minimal period $p$ is strictly less than $n$, by the above. 
Thus, there exists an $\omega$-legal coloring $\gamma$ of $\{1, 2, \dots, 2n\}$
such that
\begin{enumerate}
\item $\gamma(j) = \gamma(j + p)$ for $n- p < j \le 2n - p$
\item $\gamma(n-p) \neq \gamma(n)$
\end{enumerate}
Note that such a coloring is determined by $p$ and by its restriction to $\{1, 2, \dots, n\}$,
so there are at most $n |\mathcal{A}|^n$ of them.
We claim that the colorings $\beta_j : \{1, 2, \dots, n\} \to \mathcal{A}$
given by $\beta_j(i) = \gamma(j+i)$ for $0 \le j \le n-1$ are all distinct.
To see this, let $m(\beta) = \max\{i \colon \beta(i) \neq \beta(i + p)\}$.
For $0 \le j \le n - p-1$, $m(\beta_j) = n - j - p$, so these are all distinct.
For $n-p \le j \le n-1$, $\{i \colon \beta_j(i) \neq \beta_j(i+p)\}$ is the empty set,
so $\beta_j$ is distinct from $\beta_{j'}$ for any $0 \le j \le n-p-1 < j' \le n - 1$.
We claim that, if $n-p \le j < j' \le n-1$, then
$\beta_j$ and $\beta_{j'}$ are distinct as well.
Indeed, suppose there exist $n-p \le j < j' \le n-1$
with $\beta_j = \beta_{j'}$ and let $p' = j' - j < p$.
Then for $j +1 \le i \le j + n$, $\gamma(i) = \gamma(i + p')$.
Now, let $i > n- p$ be arbitrary. Since $p < n$ there exists $k \in \Z$ such that 
$i + kp \in [j+1, j+ n]$ and so 
$$\gamma(i) = \gamma(i + kp) = \gamma(i + kp + p') = \gamma(i + p').$$
Since $i > n-p$ was arbitrary, this violates the minimality of the period $p$.
Hence, the $\beta_j$ are all distinct for $0 \le j \le n-1$,
so the probability of such a coloring of $\{1, 2, \dots, 2n\}$ existing is
at most $n |\mathcal{A}|^n\alpha^n$. Since this tends to zero, the probability that $X_\omega$ contains an aperiodic coloring does as well.
\end{proof}

\begin{remark}
As in \cite{M}, we may analogously define a random subshift of a fixed SFT $X$. 
If the entropy of $X$ is $\log \lambda$,
then for large $\ell$ the number of words of length $\ell$ that appear in $X$ is approximately $\lambda^\ell$.
It is proved in \cite{M} that if $\alpha < 1/\lambda$, then the probability that $X_\omega$ has positive entropy tends to $0$ as $n \to \infty$.
Using these two facts and
making the obvious changes to the above proof, we
see that for $\alpha < 1/\lambda$, the probability that a random SFT $X_\omega \subset X$ is finite tends to $1$ as $n \to \infty$.
And again by \cite{M}, this value $1/\lambda$ is optimal.
\end{remark}

\section{Higher dimensions}
\label{higherdim}

\ignore{
\begin{lemma}
$\eta \in X_\omega$ has finite orbit if and only if there exist 
linearly independent integer vectors $\mathbf{p}_1, \dots, \mathbf{p}_d$
such that $\eta(\mathbf{j} + \mathbf{p}_i) = \eta(\mathbf{j})$ for every 
$\mathbf{j} \in \Z^d$ and every $1 \le i \le d$.
\end{lemma}

\begin{proof}
First suppose $\{T^{\mathbf{m}} \eta \colon \mathbf{m} \in \Z^d\}$ is a finite set.
Then in particular $\{T^{m\mathbf{e}_1} \eta \colon m \in \Z\}$
is finite. Thus, there exists a nonzero $m \in \Z$
such that $T^{m\mathbf{e}_1}\eta = \eta$, i.e. 
$\eta(\mathbf{j} + m\mathbf{e}_1) = \eta(\mathbf{j})$ for every $\mathbf{j} \in \Z^d$.
Arguing similarly for the other $d-1$ standard basis vectors we obtain one direction of the lemma.

Now suppose there are linearly independent period vectors
$\mathbf{p}_1, \dots, \mathbf{p}_d$. Clearly there exist $m_i \in \Z$ such that, for each $1 \le i \le d$,
$m_i \mathbf{e}_i = \displaystyle\sum_{j=1}^d a_{i,j} \mathbf{p}_j$ for some $a_{i,j} \in \Z$.
It follows that the vectors $m_i \mathbf{e}_i$ are also period vectors.
But then, for any $b_1, \dots, b_d \in \Z$ if we let $b_i^\ast \in \{0, 1, \dots, m_i -1\}$
satisfy $b_i^\ast \equiv b_i \mod m_i$, then
$T^{(b_1, \dots, b_d)}\eta = T^{(b_1^\ast, \dots, b_d^\ast)}\eta$.
But then since $\prod_{i=1}^d \{0, 1, \dots, m_i - 1\}$ is finite, it follows
that the orbit of $\eta$ is finite.
\end{proof}
}

In higher dimensions, we will prove that for large $n$, the probability
that $X_\omega$ contains a coloring which is not periodic in $d$ linearly independent
directions tends to zero.
Throughout this section, we will keep the notation more manageable by
saying that a coloring $\eta \colon \Z^d \to \mathcal{A}$ has a certain property
on a set $B \subset \R^d$ if it has that property on $B \cap \Z^d$.
We will also refer to colorings of $B$ by which we mean colorings of
$B \cap \Z^d$.

We begin by proving that, with high probability, colorings in $X_\omega$ are 
locally periodic in $d$ linearly independent directions. 
The following lemma provides periodicity in one direction and illustrates the main idea 
of the more complicated general argument.

\begin{lemma}
\label{distinct}
Fix $d \in \N$.
For $\vre > 0$, 
let $D_\vre$ be the set of $\omega \in \Omega^d_n$ for which there is an 
$\omega$-legal coloring $\beta$ of $[1,n+2\vre n]^d$
such that the restrictions $\beta|_{\mathbf{x}+ [1+\vre n, n+\vre n]^d}$ are distinct for 
all integer vectors $\mathbf{x}$ with $\|\mathbf{x}\| < \frac{\vre n}{2}$.
Then for $\alpha < |\mathcal{A}|^{-(2d)^d\left(\frac{1+2\vre}{\vre}\right)^d}$, 
$\lim_{n\to \infty} \mu_{n,\alpha}(D_\vre) = 0$.
\end{lemma}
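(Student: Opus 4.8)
The plan is to run a first-moment (union bound) argument over all candidate colorings $\beta$ that could witness membership in $D_\vre$, exploiting the fact that each such $\beta$ forces a large number of \emph{distinct} patterns to lie in $\omega$, an event of exponentially small probability that beats the exponential count of candidate colorings.

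First I would let $A$ denote the set of all colorings $\beta \colon [1, n+2\vre n]^d \to \mathcal{A}$ whose central translates $\beta|_{\mathbf{x} + [1+\vre n, n+\vre n]^d}$ are pairwise distinct over all integer $\mathbf{x}$ with $\|\mathbf{x}\| < \vre n/2$, and write $D_\vre(\beta)$ for the set of $\omega$ making $\beta$ $\omega$-legal, so that $D_\vre = \bigcup_{\beta \in A} D_\vre(\beta)$. For a fixed $\beta \in A$, each box $\mathbf{x} + [1+\vre n, n+\vre n]^d$ is a translate of $\{1,\dots,n\}^d$ contained in $[1, n+2\vre n]^d$; this containment is a small geometric check, since the central box sits at distance $\vre n$ from the boundary and is shifted by at most $\vre n/2$. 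Hence $\omega$-legality forces each restriction $\beta|_{\mathbf{x}+[1+\vre n, n+\vre n]^d}$ into $\omega$, and by the definition of $A$ these are distinct elements of $\omega$, so $\mu_{n,\alpha}(D_\vre(\beta)) \le \alpha^{N}$, where $N$ is the number of admissible offset vectors $\mathbf{x}$.

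The crucial quantitative step is to pin down $N$. Restricting attention to offsets in the positive orthant with $0 \le x_i < \vre n/(2d)$ for each coordinate guarantees $\|\mathbf{x}\| \le \|\mathbf{x}\|_1 < \vre n/2$, which yields the clean lower bound $N \ge (\vre n/(2d))^d$ (up to harmless rounding); this is exactly the estimate that produces the stated threshold. Combining it with the trivial bound $|A| \le |\mathcal{A}|^{(n+2\vre n)^d}$ and the union bound gives
\[
\mu_{n,\alpha}(D_\vre) \;\le\; |A|\,\alpha^N \;\le\; \Big( |\mathcal{A}|^{(1+2\vre)^d}\, \alpha^{(\vre/(2d))^d} \Big)^{n^d}.
\]
The right-hand side tends to $0$ precisely when the base is less than $1$, that is, when $\alpha^{(\vre/(2d))^d} < |\mathcal{A}|^{-(1+2\vre)^d}$, which rearranges to $\alpha < |\mathcal{A}|^{-(2d)^d((1+2\vre)/\vre)^d}$, the hypothesis of the lemma.

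The main obstacle is not the probabilistic estimate, which is a routine union bound, but the bookkeeping that produces the exponent $(2d)^d((1+2\vre)/\vre)^d$: one must choose enough offset vectors $\mathbf{x}$ to force many forbidden patterns (large $N$) while simultaneously keeping every shifted box inside the domain of $\beta$, and then balance this against the count $|\mathcal{A}|^{((1+2\vre)n)^d}$ of candidate colorings. The positive-orthant restriction above is the device that satisfies both constraints at once and delivers the constant $2d$; any looser lattice-point count would worsen the admissible threshold for $\alpha$.
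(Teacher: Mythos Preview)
Your proposal is correct and follows essentially the same first-moment/union-bound argument as the paper: define $A$, decompose $D_\vre = \bigcup_{\beta\in A} D_\vre(\beta)$, bound $\mu_{n,\alpha}(D_\vre(\beta))\le \alpha^N$ via the count of distinct forced patterns, and compare exponents. Your positive-orthant trick for producing $N \ge (\vre n/(2d))^d$ is exactly one way to justify the lattice-point count that the paper simply asserts, and the resulting inequality and threshold match the paper's verbatim.
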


\begin{proof}
Let $A$ denote the set of $\beta \colon [1, n+2\vre n]^d \to \mathcal{A}$
such that the restrictions $\beta|_{\mathbf{x}+ [1+\vre n, n+\vre n]^d}$ are distinct for 
integer vectors $\mathbf{x}$ with $\|\mathbf{x}\| < \frac{\vre n}{2}$.
Note that there are at least $\frac{1}{(2d)^d}(\vre n)^d$
such vectors $\mathbf{x}$.
Given $\beta \in A$,
let $D_\vre(\beta)$ denote the set of $\omega \in \Omega$ such that $\beta$ is $\omega$-legal.
Since there are at least $\left(\frac{\vre n}{2d}\right)^d$ 
distinct colorings of the form $\beta|_{\mathbf{x}+ [1+\vre n, n+\vre n]^d}$,
all of which must be contained in $\omega$, we see that $\mu_{\alpha, n}(D_\vre(\beta)) \le \alpha^{(2d)^{-d}(\vre n)^d}$.
Since $D_\vre = \bigcup_{\beta \in A} D_\vre(\beta)$, we have
\begin{eqnarray*}
	\mu_{\alpha, n}(D_\vre) \le \sum_{\beta \in A} \mu_{\alpha, n}(D_\vre (\beta)) \le
			|A| \alpha^{(2d)^{-d}(\vre n)^d} &\le& 
				|\mathcal{A}^{[1,n+2\vre n]^d}| (\alpha^{(2d)^{-d}\vre^d})^{n^d} \\
			&=& 	(\alpha^{(2d)^{-d}\vre^d}|\mathcal{A}|^{(1+2\vre)^d})^{n^d}.
\end{eqnarray*}
If $\alpha < |\mathcal{A}|^{-(2d)^d\left(\frac{1+2\vre}{\vre}\right)^d}$, then the righthand side tends to $0$, which completes the proof.
\end{proof}

\ignore{
\begin{lemma}
\label{aperiodic-local}
Fix $\vre > 0$.
Given $\beta : [1,(1+\vre)n]^d \to \mathcal{A}$, let $m(\beta)$
be the maximal integer $m$ such that there exist linearly independent nonzero integer vectors
$\mathbf{p}_1, \dots, \mathbf{p}_m$ 
with $|\mathbf{p}_i| \le \vre n$ satisfying $\beta(\mathbf{j} + \mathbf{p}_i) = \beta(\mathbf{j})$
for all $\mathbf{j} \in [1+\vre n, n-\vre n]^d$ and $1 \le i \le m$.
Let $NP_p$ be the set of $\omega$ for which there exists an $\omega$-legal coloring $\beta : [1,n]^d \to \mathcal{A}$
with $m_p(\beta) < d$. Then for small enough $\alpha$, $\mu_\alpha(NP_{\vre n}) \to 0$ as $n \to \infty$.

Furthermore, for any $0 < \theta < \prod/2$ and for sufficiently small $\alpha > 0$, 
the $d$ period vectors can be chosen so that the angle between any two of them is at least $\theta$.
\end{lemma}
}

Note that this implies that, with probability tending to $1$,
every legal coloring of $[1 + 2\vre n, n]^d$ will have a period vector of length at most $\vre n$.
We want to extend this result and prove that, with probability tending to $1$, 
we can find $d$ period vectors 
which are small in magnitude and ``almost orthogonal.''
Before we do this, we first prove that the latter property implies linear independence.

\begin{lemma}
\label{lin-ind}
Let $\mathbf{p}_1, \dots, \mathbf{p}_m \in \R^d$ satisfy $|\mathbf{p}_i \cdot \mathbf{p}_j | \le \dfrac{1}{2m^2} \|\mathbf{p}_i\| \|\mathbf{p}_j\|$
for every $1 \le i < j \le m$. Then $\mathbf{p}_1, \dots, \mathbf{p}_m$ are linearly independent.
\end{lemma}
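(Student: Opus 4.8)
The plan is to exploit the fact that an almost-orthogonal family of vectors yields a strictly diagonally dominant Gram matrix, so that no nontrivial linear relation can survive. The cleanest route avoids matrices entirely and works directly with the coefficients of a putative relation.

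First I would reduce to the unit-vector case. Both the hypothesis $|\mathbf{p}_i \cdot \mathbf{p}_j| \le \frac{1}{2m^2}\|\mathbf{p}_i\|\|\mathbf{p}_j\|$ and the conclusion of linear independence are invariant under rescaling each $\mathbf{p}_i$ by a positive scalar, and the vectors arising in the application are nonzero period vectors (if some $\mathbf{p}_i = \mathbf{0}$ the statement fails, so we assume all are nonzero). Hence I may replace each $\mathbf{p}_i$ by $\mathbf{p}_i/\|\mathbf{p}_i\|$ and assume $\|\mathbf{p}_i\| = 1$ for all $i$ and $|\mathbf{p}_i \cdot \mathbf{p}_j| \le \frac{1}{2m^2}$ for all $i \neq j$.

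Next, suppose toward a contradiction that $\sum_{i=1}^m c_i \mathbf{p}_i = \mathbf{0}$ for scalars not all zero. Rescaling the relation, I may assume $\max_i |c_i| = 1$, attained at some index $k$, with $|c_i| \le 1$ for every $i$. Taking the inner product of the relation with $\mathbf{p}_k$ and isolating the diagonal term $c_k\|\mathbf{p}_k\|^2 = c_k$ gives
$$|c_k| = \left| \sum_{i \neq k} c_i\, (\mathbf{p}_i \cdot \mathbf{p}_k) \right| \le \sum_{i \neq k} |c_i|\, |\mathbf{p}_i \cdot \mathbf{p}_k| \le (m-1)\cdot\frac{1}{2m^2} < \frac{1}{2}.$$
This contradicts $|c_k| = 1$, so no nontrivial relation exists and $\mathbf{p}_1, \dots, \mathbf{p}_m$ are linearly independent.

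I do not expect a genuine obstacle: the whole argument rests on the elementary inequality $(m-1)/(2m^2) < 1$, and the only points to verify are the harmless normalizations (nonzero vectors, unit length, largest coefficient equal to $1$). An alternative I would keep in reserve, should a more robust formulation be wanted, is to apply Gershgorin's circle theorem to the Gram matrix $G = (\mathbf{p}_i \cdot \mathbf{p}_j)$: its diagonal entries equal $1$ while each off-diagonal row sum is at most $(m-1)/(2m^2) < 1$, so every eigenvalue is positive and $G$ is nonsingular; but the direct coefficient argument above is shorter and fully self-contained.
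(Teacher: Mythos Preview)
Your proof is correct and follows essentially the same idea as the paper's: assume a nontrivial linear relation and derive a contradiction by isolating the dominant term against the almost-orthogonality bound. Your preliminary normalization to unit vectors and choice of $k$ as the index of the largest coefficient streamline the argument compared with the paper's version, which instead bounds $\|\mathbf{p}_k\|^2$ from above and below without normalizing.
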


\begin{proof}
Let $\mathbf{p}_1, \dots, \mathbf{p}_k \in \R^d$ satisfy $|\mathbf{p}_i \cdot \mathbf{p}_j | \le \delta \|\mathbf{p}_i\| \|\mathbf{p}_j\|$
for every $1 \le i < j \le m$ and suppose they are linearly dependent. We will show that $\delta > \dfrac{1}{2m^2}$.
Linear dependence implies $\mathbf{p}_k = \sum_{i=1}^{k-1} a_i \mathbf{p}_i$
for some $k \le m$ and some $a_i \in \R$. Let $A = \displaystyle\max_{1 \le i \le k-1} \| a_i \mathbf{p}_i\|$.
Clearly, $\|\mathbf{p}_k\| < m A$, so we have
$$\| \mathbf{p}_k \|^2 =  \sum_{i=1}^{k-1} a_i \mathbf{p}_i \cdot \mathbf{p}_k  \le \sum_{i=1}^{k-1} |a_i| \delta \| \mathbf{p}_i \| \|\mathbf{p}_k\|
			< \delta m^2 A^2$$
But we also have
$$\| \mathbf{p}_k \|^2 = \left(   \sum_{i=1}^{k-1} a_i \mathbf{p}_i  \right) \cdot \left(   \sum_{i=1}^{k-1} a_i \mathbf{p}_i  \right) 
				= \sum_{i=1}^{k-1} a_i^2 \|\mathbf{p}_i\|^2 + \sum_{i\neq j} a_i a_j \mathbf{p}_i \cdot \mathbf{p}_j.$$
Combining these two inequalities we obtain
$$A^2 - \delta m^2 A^2 \le  \sum_{i=1}^{k-1} a_i^2 \|\mathbf{p}_i\|^2 + \sum_{i\neq j} a_i a_j \mathbf{p}_i \cdot \mathbf{p}_j
				= \|\mathbf{p}_k \|^2 < \delta m^2 A^2.$$
It follows that $1 < 2m^2 \delta$, as desired.
\end{proof}

\begin{lemma}
\label{aperiodic-local}
Fix $d \in \N$, $0 < \vre < 1/2$, and $0 < \delta < \dfrac{1}{2d^2}$.
Let $F_n = F_n(\vre, \delta)$ be the set of $\omega$ such that for every 
$\beta \in X_\omega$ and every $\mathbf{x} \in \Z^d$ there exist nonzero
integer vectors $\mathbf{p}_1, \dots, \mathbf{p}_d$ such that
\begin{enumerate}
\item $\|\mathbf{p}_i\| \le \vre n$ for all $1 \le i \le d$
\item 
\label{property2}
$|\mathbf{p}_i \cdot \mathbf{p}_j| \le \|\mathbf{p}_i\| \|\mathbf{p}_j\| \delta$ for all $1 \le i < j \le d$
\item $\beta(\mathbf{j} +\mathbf{p}_i) = \beta(\mathbf{j})$ for all 
$\mathbf{j} \in \mathbf{x} + [1+2\vre n, n]^d$ and $1 \le i \le d$.
\end{enumerate}
Then, there exists $\alpha_0 = \alpha(|\mathcal{A}|, d, \vre, \delta) > 0$
such that if $0 < \alpha < \alpha_0$,
$$\lim_{n \to \infty} \mu_{\alpha,n}(\Omega_n^d \setminus F_n) = 0.$$
\end{lemma}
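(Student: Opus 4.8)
The plan is to bootstrap from Lemma~\ref{distinct}, which already gives us one short period vector for every legal block, into the existence of $d$ almost-orthogonal short period vectors. The key difficulty is that Lemma~\ref{distinct} is purely about a single direction: it guarantees that any legal coloring of a window of size $[1,n+2\vre n]^d$ has \emph{some} period vector of length at most $\vre n$, but it says nothing about finding several that are mutually almost-orthogonal. So the heart of the argument is an iterative, dimension-by-dimension construction together with a union bound that kills, with probability tending to $1$, every ``bad'' legal configuration obstructing the passage from $k$ to $k+1$ almost-orthogonal period vectors.

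Concretely, **first I would** set up a version of Lemma~\ref{distinct} adapted to lower-dimensional slices. Fix a coloring $\beta \in X_\omega$ and a base point $\mathbf{x}$. By Lemma~\ref{distinct} (applied with a slightly shrunken $\vre$ to leave room), with probability tending to $1$ there is a period vector $\mathbf{p}_1$ with $\|\mathbf{p}_1\| \le \vre n$ valid on the relevant window. Suppose inductively we have found almost-orthogonal period vectors $\mathbf{p}_1, \dots, \mathbf{p}_k$ ($k < d$) satisfying properties (i)--(iii) on the appropriate translated cube; by Lemma~\ref{lin-ind} these are linearly independent, so they span a $k$-dimensional rational subspace $V_k$. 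The plan is to produce $\mathbf{p}_{k+1}$ with a controlled component \emph{orthogonal} to $V_k$. To do this I would look at the restriction of $\beta$ to a suitable lower-dimensional ``transversal'' slice — essentially a box of comparable size sitting in a direction complementary to $V_k$ — and argue that on this slice $\beta$ must again be locally periodic, by a counting/union-bound argument parallel to the proof of Lemma~\ref{distinct}: if no short period vector with substantial orthogonal component existed, the $\binom{\vre n}{\cdots}$-many restrictions of $\beta$ to translated subwindows of the slice would have to be distinct, each forcing a distinct pattern into $\omega$, which happens with probability at most $\alpha^{c(\vre n)^d}$, and there are only $|\mathcal{A}|^{O(n^d)}$ such $\beta$ to union over. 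Choosing $\alpha_0$ small enough (depending on $|\mathcal{A}|, d, \vre, \delta$) makes this probability tend to $0$.

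**The subtle point** is guaranteeing that the new period vector $\mathbf{p}_{k+1}$ is almost-orthogonal to \emph{all} of $\mathbf{p}_1, \dots, \mathbf{p}_k$ simultaneously, in the sense of property (\ref{property2}), rather than merely not lying in $V_k$. The mechanism is to demand that the slice we examine is (approximately) orthogonal to $V_k$ and that the period vector we extract lies essentially within this slice: a vector nearly parallel to the orthogonal complement of $V_k$ automatically has small inner product with each $\mathbf{p}_i$ relative to the norms. Here I would use the discreteness of $\Z^d$ and the upper bound $\|\mathbf{p}_i\| \le \vre n$ to ensure that ``approximately orthogonal'' at the geometric level translates into the exact inequality $|\mathbf{p}_i \cdot \mathbf{p}_{k+1}| \le \delta \|\mathbf{p}_i\|\|\mathbf{p}_{k+1}\|$, shrinking $\vre$ and $\delta$ at each of the $d$ stages by constants so that the final vectors meet the stated thresholds. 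One must be careful that the windows shrink in a controlled way: each step costs an additive $O(\vre n)$ in the size of the region on which periodicity is asserted, and since there are only $d$ steps, the total loss is $O(d \vre n)$, which is absorbed by the $[1+2\vre n, n]^d$ in property (iii).

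**The main obstacle** I expect is the bookkeeping that makes the inductive extraction of an almost-orthogonal period vector genuinely a statement about patterns forced into $\omega$, so that the union bound applies with an exponent of the form $c(\vre n)^d$ in the base $\alpha$. The clean way to handle this is to reduce each inductive step to an instance of the distinctness phenomenon of Lemma~\ref{distinct}: assuming \emph{failure} to find a short almost-orthogonal period vector, one exhibits a legal coloring of a box all of whose translated subwindows (indexed by integer vectors in a region of cardinality $\gtrsim (\vre n)^d$) are pairwise distinct, and this is precisely the event $D_{\vre'}$ that Lemma~\ref{distinct} shows has vanishing probability. Granting this reduction, the remaining estimates are the same geometric-series and counting bounds already carried out above, and choosing $\alpha_0$ to be the minimum of the finitely many thresholds arising at the $d$ stages completes the proof.
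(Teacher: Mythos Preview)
Your inductive outline captures the right geometric picture, but the union bound you propose does not close. The issue is the sentence ``each forcing a distinct pattern into $\omega$, which happens with probability at most $\alpha^{c(\vre n)^d}$, and there are only $|\mathcal{A}|^{O(n^d)}$ such $\beta$ to union over.'' At stage $k$, the translates you are looking at lie in a slab that is thin (width $\sim \delta\|\mathbf{p}_i\|$) in the $\mathbf{p}_i$-directions and has thickness $\sim \vre n$ in the remaining $d-k$ orthogonal directions. The number of integer points in that slab is of order $\delta^k\bigl(\prod_{i=1}^k\|\mathbf{p}_i\|\bigr)(\vre n)^{d-k}$, not $(\vre n)^d$; when the $\mathbf{p}_i$ happen to be short this is only $\asymp n^{d-k}$, far too small to beat $|\mathcal{A}|^{O(n^d)}$. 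Restricting the union to colorings already periodic under $\mathbf{p}_1,\dots,\mathbf{p}_k$ does not fix this either: that periodicity only holds on the inner cube $[1+2\vre n,n]^d$, and the boundary shell $[1,n+2\vre n]^d\setminus[1+2\vre n,n]^d$ alone carries $|\mathcal{A}|^{c\vre n^d}$ unconstrained colorings.

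The paper resolves this by introducing a single quantity $P(\mathbf{x}) = n^{d-k}\prod_{i=1}^k\|\mathbf{p}_i\|$ (minimized over admissible almost-orthogonal period systems) and choosing $\mathbf{x}_0$ that \emph{maximizes} $P(\mathbf{x})$ over all translates. This maximality is used twice: first, the number of distinct subpatterns forced is $\gtrsim_{\vre,\delta,d} P(\mathbf{x}_0)$, so the legality probability is $\alpha^{c\,P(\mathbf{x}_0)}$; second, and this is the step your plan lacks, the big window $[1,n+2\vre n]^d$ is covered by $2^d$ translates of $[1+2\vre n,n]^d$, \emph{each} of which has period system with product at most $P(\mathbf{x}_0)$, so the total coloring count is at most $(\text{poly}(n))\,|\mathcal{A}|^{c' P(\mathbf{x}_0)}$. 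Both exponents are now comparable to $P(\mathbf{x}_0)\ge n$, and the union bound closes for $\alpha$ below an explicit threshold. Without this global maximization device, the counting in your inductive step does not balance.
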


\begin{proof}
\ignore{
For each $\mathbf{x} \in \Z^d$, let
$V(\mathbf{x})$ be the maximal volume of a box $B$ spanned by orthogonal vectors 
$\mathbf{p}_1,\dots, \mathbf{p}_d$ with the property that 
$\beta|_{[1,n]^d + \mathbf{j}}$ are distinct for $\mathbf{j} \in B$.
Note that by Lemma \ref{distinct}, $V(\mathbf{x}) \le \vre n^d$ for each $\mathbf{x} \in \Z^d$.
Fix $\mathbf{x}_0 \in \Z^d$ such that $V(\mathbf{x}_0) \ge V(\mathbf{x})$ for all $\mathbf{x}\in \Z^d$,
and let $\mathbf{p}_1, \dots, \mathbf{p}_d$ span $B$ as above. Suppose
$V(\mathbf{x}_0) \ge \vre n$.
Then there exists a legal coloring of $[1,n+2\vre n]^d$ such that....
But the probability of this event is at most
$$\alpha^{V(\mathbf{x}_0)}|\mathcal{A}|^{2^d V(\mathbf{x}_0)}.$$
For $\alpha < |\mathcal{A}|^{2^{-d}}$, this is less than
$(\alpha|\mathcal{A}|^{2^d})^{\vre n}$, which tends to $0$.
Hence, we must have $V(\mathbf{x}) < \vre n$ for every $\mathbf{x}\in \Z^d$.
Now fix $\mathbf{x} \in \Z^d$ and let $\mathbf{p}_1$ be the shortest vector 
such that $\beta|_{[1,n]^d + \mathbf{x}} = \beta|_{[1,n]^d + \mathbf{x} + \mathbf{p}_1}$.
Let $\mathbf{q}_1, \dots, \mathbf{q}_{d-1}$ be unit vectors orthogonal to $\mathbf{p}_1$.
Then the box spanned by $\delta \mathbf{p}_1$ and $\vre n \mathbf{q}_1, \dots, \vre n \mathbf{q}_{d-1}$
has volume at least $\vre^{d-1} n^{d-1} > \vre n$ (for large $n$) and therefore
there exist $\mathbf{j}_1$ and $\mathbf{j}_2$ in this box with the property that
$\beta|_{[1,n]^d + \mathbf{x} + \mathbf{j}_1} = \beta|_{[1,n]^d + \mathbf{x} + \mathbf{j}_2}$.
}

Let $\alpha_0 < \min\{|\mathcal{A}|^{-(\frac{4d^{5/2}}{\vre \delta})^d}, |\mathcal{A}|^{-(\frac{6d}{\delta})^d}\}$.
We begin by assigning a quantity $P(\beta)$ to each $\beta \in \mathcal{A}^{\Z^d}$.
For each $\mathbf{x} \in \Z^d$, let
$P(\mathbf{x})$ be the minimum value of $n^{d-k}\prod_{i=1}^k \|\mathbf{p}_i\|$,
taken over all sets of vectors $\{\mathbf{p}_1, \dots, \mathbf{p}_k\}$ which satisfy
\begin{enumerate}[label=(\alph*)]
\item $\|\mathbf{p}_1\| \le \|\mathbf{p}_2\| \le \dots \le \|\mathbf{p}_k\| < \vre n$\label{periodvectors1}
\item $\beta(\mathbf{x} + \mathbf{j} + \mathbf{p}_i) = \beta(\mathbf{x} + \mathbf{j})$
		for all $\mathbf{j} \in [1 + 2\vre n, n ]^d$ and all $1 \le i \le k$ \label{periodvectors2}
\item For each $1 \le j \le k$, $\mathbf{p}_j$ is a nonzero integer vector of minimal length among
		those satisfying
		the above two properties as well as
		$|\mathbf{p}_i \cdot \mathbf{p}_j| \le
		\delta \|\mathbf{p}_i\|^2 \le
		 \delta\|\mathbf{p}_i\| \|\mathbf{p}_j\|$
		for all $1 \le i < j$ \label{periodvectors3}
\end{enumerate}
If no such vectors exist, set $P(\mathbf{x}) = \infty$.
Note that for any $\omega \not\in D_\vre$ and any $\beta \in X_\omega$, $P(\mathbf{x})$ is
bounded above
by $\vre n^d$ for all $\mathbf{x} \in \Z^d$.
So by Lemma \ref{distinct} and our choice of $\alpha_0$, 
we may assume $P(\mathbf{x}) \le \vre n^d$ for each $\beta \in X_\omega$ and $\mathbf{x} \in \Z^d$.
Fix $\mathbf{x}_0 \in \Z^d$ such that $P(\beta) \df P(\mathbf{x}_0) \ge P(\mathbf{x})$ 
for all $\mathbf{x}\in \Z^d$,
and let $\mathbf{p}_1, \dots, \mathbf{p}_k$ be vectors satisfying \ref{periodvectors1}-\ref{periodvectors3}
with $P(\mathbf{x}_0) = n^{d-k}\prod_{i=1}^k \|\mathbf{p}_i\|$.
By Lemma \ref{lin-ind}, the vectors $\mathbf{p}_1, \dots, \mathbf{p}_k$ are linearly independent.
We claim that the probability that $P(\beta) \ge \vre n$ for some $\beta \in X_\omega$ tends to zero,
from which the lemma follows.
To show this, we consider two cases: $k < d$ and $k = d$.
We claim that, with probability tending to $1$, the first case doesn't occur, and the second
occurs only with $P(\mathbf{x}_0) < \vre n$.
\begin{enumerate}
\item[{\bf Case 1}:]\emph{$k < d$}

Let $\mathbf{q}_1, \dots \mathbf{q}_{d-k}$
be unit vectors orthogonal to the span of $\mathbf{p}_1, \dots, \mathbf{p}_k$
and orthogonal to each other,
and let $\mathbf{j}_0 = (\vre n,\vre n, \dots, \vre n)$.
We claim that the colorings $\beta|_{\mathbf{x_0} + [1, n]^d + \mathbf{j}_0 + \mathbf{j}}$
must be distinct for integer vectors $\mathbf{j} = \sum_{i=1}^k a_i \mathbf{p}_i + \sum_{i=1}^{d-k} b_i\mathbf{q}_i$,
where $0 \le a_i < \delta/d$ and $0 \le b_i < \frac{\vre n}{2d}$ are real numbers. Indeed, if not then there exist vectors
$\mathbf{j}_1$ and $\mathbf{j}_2$ of this form such that 
$$\beta|_{\mathbf{x}_0 + [1, n]^d + \mathbf{j}_0 + \mathbf{j}_1} 
		= \beta|_{\mathbf{x}_0 + [1,n]^d + \mathbf{j}_0 + \mathbf{j}_2}.$$
Letting $\mathbf{p}_{k+1} = \mathbf{j}_2 - \mathbf{j}_1$, it follows
that 
\begin{itemize}
\item $\|\mathbf{p}_{k+1}\| < d \frac{\delta}{d} \vre n + d \frac{\vre n}{2d} < \vre n$
\item $\beta(\mathbf{x}_0 + \mathbf{j} + \mathbf{p}_{k+1}) = \beta(\mathbf{x}_0 + \mathbf{j})$
for all $\mathbf{j} \in [1+ 2\vre n,n]^d$
\item $|\mathbf{p}_{k+1} \cdot \mathbf{p}_i| \le 
\sum_{j=1}^k a_i |\mathbf{p}_j \cdot \mathbf{p}_i| 
\le \frac{d\delta}{d} \|\mathbf{p}_i\|^2 = \delta\|\mathbf{p}_i\|^2$ for every $1 \le i \le k$.
\end{itemize}
Now, suppose $\|\mathbf{p}_i\| > \|\mathbf{p}_{k+1}\|$ for some $1 \le i \le k$
and let $i_0$ be the least such index.
Then
$|\mathbf{p}_i \cdot \mathbf{p}_{k+1}| \le \delta\|\mathbf{p}_i\|^2 \le \delta\|\mathbf{p}_i\|\|\mathbf{p}_{k+1}\|$
for all $ 1 \le i < i_0$, which violates the assumption that $\mathbf{p}_{i_0}$ is of minimal
length among such vectors. Thus,
$|\mathbf{p}_{k+1} \cdot \mathbf{p}_i| \le \delta\|\mathbf{p}_i\|^2 
\le \delta \|\mathbf{p}_{k+1}\|\|\mathbf{p}_i\|$ for all
$1 \le i \le k$, so (replacing $\mathbf{p}_{k+1}$ with a shorter vector
satisfying the above properties if necessary) $\{\mathbf{p}_1, \dots, \mathbf{p}_{k+1}\}$
satisfies conditions \ref{periodvectors1}-\ref{periodvectors3} above, violating the minimality of $n^{d-k}\prod_{i=1}^k \|\mathbf{p}_i\|$ in the definition of $P(\mathbf{x}_0)$.

But notice that, since the vectors $\mathbf{p}_1, \dots, \mathbf{p}_k, \mathbf{q}_1, \dots, \mathbf{q}_{d-k}$
are linearly independent, the number of integer vectors $\mathbf{j} = \sum_{i=1}^k a_i \mathbf{p}_i + \sum_{i=1}^{d-k} b_i\mathbf{q}_i$,
where $0 \le a_i < \delta/d$ and $0 \le b_i < \frac{\vre n}{2d}$, is at least 
$$\left(\displaystyle\prod_{i=1}^k (\delta/d) \|\mathbf{p}_i\| \right) \left( \displaystyle\prod_{i=1}^{d-k} \frac{\vre n}{2d} \right) = (\delta/d)^k (\vre/2d)^{d-k} P(\beta)$$
Thus, if the colorings $\beta|_{ \mathbf{x_0} + [1, n]^d + \mathbf{j_0} + \mathbf{j}}$
are distinct for $\mathbf{j} = \sum_{i=1}^k a_i \mathbf{p}_i + \sum_{i=1}^{d-k} b_i\mathbf{q}_i$
($0 \le a_i < \frac{\delta}{d}, 0 \le b_i < \frac{\vre n}{2d}$),
then by the maximality of $P(\mathbf{x}_0)$ there is an $\omega$-legal coloring of $[1, n + 2\vre n]^d$
such that
\begin{itemize}
\item $[1, n + 2\vre n]^d$ is the union of $2^d$ translates of $[1+2\vre n, n]^d$,
		each of which having period vectors $\mathbf{p}_1', \dots, \mathbf{p}_{k'}'$
			with $\|\mathbf{p}_i'\| < \vre n$
			satisfying $n^{d-k'}\prod_{i=1}^{k'} \|\mathbf{p}_i'\| \le P(\beta)$
\item $[1, n + 2\vre n]^d$ contains at least $(\frac{\delta}{d})^d (\frac{\vre}{2d})^d P(\beta)$ translates of
		$[1,n]^d$ on which the restrictions of this legal coloring are distinct.
\end{itemize}
Now, there are most $((2\vre n)^d + 1)^d \le (3\vre n)^{d^2}$ choices of a set of at most $d$ integer vectors
with norm at most $\vre n$. And once these period vectors are fixed, there
are most $|\mathcal{A}|^{(\sqrt{d}n)^{d-k'}\prod_{i=1}^{k'} \|\mathbf{p}_i'\|}$ colorings
of $[1 + 2\vre n, n]^d$ that are periodic with those period vectors. Hence, the first property
bounds the number of such colorings of $[1, n + 2\vre n]^d$ by 
$(3\vre n)^{d^22^d} |\mathcal{A}|^{(2\sqrt{d})^dP(\beta)}$. 
The second property implies that the probability of any one of
these being $\omega$-legal is at most $\alpha^{(\frac{\vre\delta}{2d^2})^d P(\beta)}$.
So the probability that such a coloring is $\omega$-legal, and hence the probability that 
Case 1 occurs, is at most 
$$(3\vre n)^{d^22^d}|\mathcal{A}|^{(2\sqrt{d})^d P(\beta)}\alpha^{(\frac{\vre \delta}{2d^2})^{d} P(\beta)}.$$
Since every integer vector has length at least $1$ and $k < n$,
$P(\beta) \ge n$.
Since $\alpha < |\mathcal{A}|^{-(\frac{4d^{5/2}}{\vre \delta})^d}$, we have
$$(3\vre n)^{d^22^d}|\mathcal{A}|^{(2\sqrt{d})^d P(\beta)}\alpha^{(\frac{\vre \delta}{2d^2})^{d} P(\beta)}
\le (3\vre n)^{d^22^d}(\alpha^{(\frac{\vre \delta}{2d^2})^{d}}|\mathcal{A}|^{(2\sqrt{d})^d})^{n},$$
which tends to zero.

\item[{\bf Case 2}:]\emph{$k = d$}

Again, by the maximality of $P(\mathbf{x_0})$, there are at most 
$(3\vre n)^{d^22^d}|\mathcal{A}|^{(2\sqrt{d})^d P(\mathbf{x}_0)}$
possible colorings of $\mathbf{x}_0  + [1, n + 2\vre n]^d$.
Consider the colorings $\beta|_{\mathbf{x_0} + [1, n]^d + \mathbf{j_0} + \mathbf{j}}$
for $\mathbf{j} = \sum_{i=1}^d a_i \mathbf{p}_i$, where $0 \le a_i < \frac{\delta}{3d}$
for $1 \le i < d$ and $0 \le a_d \le \frac13$.
If these colorings are not all distinct, 
say $\beta|_{\mathbf{x_0} + [1, n]^d + \mathbf{j_0} + \mathbf{j}_1} = \beta|_{\mathbf{x_0} + [1, n]^d + \mathbf{j_0} + \mathbf{j}_2}$,
then letting $\mathbf{p}_d' = \mathbf{j}_2 - \mathbf{j}_1$
we have
$$\|\mathbf{p}_d'\| < \sum_{i=1}^{d-1} \frac{\delta}{3d}\|\mathbf{p}_i\| + \frac{1}{3}\|\mathbf{p}_d\| 
		\le  d\frac{\delta}{3d}\|\mathbf{p}_d\| + \frac{1}{3}\|\mathbf{p}_d\| < \|\mathbf{p}_d\|.$$
Furthermore, since $\|\mathbf{j}_1\| \le \vre n$ and 
$$\beta(\mathbf{x}_0 + \mathbf{j}_0 + \mathbf{j_1} + \mathbf{j}) = \beta(\mathbf{x}_0 + \mathbf{j}_0 + \mathbf{j_2} + \mathbf{j})
											=  \beta(\mathbf{x}_0 + \mathbf{j}_0 + \mathbf{j_1} + \mathbf{j} + \mathbf{p}_d' )$$
for every $\mathbf{j} \in [1,n]^d$, 		
$\mathbf{p}_d'$ satisfies property \ref{periodvectors2} above as well.
Finally, we have
$$|\mathbf{p}_d' \cdot \mathbf{p}_i| \le \frac{\delta}{3d}\|\mathbf{p}_i\|^2
		+ d\frac{\delta}{3d}\delta\|\mathbf{p}_i\|^2 + \frac{1}{3}\delta\|\mathbf{p}_i\|^2
		\le \delta \|\mathbf{p}_i\|^2.$$
Now, if $\|\mathbf{p}_d'\| < \|\mathbf{p}_i\|$ for some $1 \le i < d$, then, arguing
as in Case 1, the minimality in \ref{periodvectors3} would be violated
for some $1 \le j < d$, so we must have 
$|\mathbf{p}_d' \cdot \mathbf{p}_i| \le \delta \|\mathbf{p}_i\|^2 \le \delta \|\mathbf{p}_i\|\|\mathbf{p}_d'\|$, violating the minimality of $\mathbf{p}_d$.

Hence, the colorings $\beta|_{\mathbf{x_0} + [1, n]^d + \mathbf{j_0} + \mathbf{j}}$ must be distinct 
for $\mathbf{j} = \sum_{i=1}^k a_i \mathbf{p}_i$.
But then we have a legal coloring
of $[1, n + 2\vre n]^d$ such that $\frac{\delta^d}{3^dd^d}P(\mathbf{x}_0)$ 
different translates of $[1,n]^d$ within
$\mathbf{x}_0  + [1, n + 2\vre n]^d$ are colored differently.
The probability of this event is at most
$$(3\vre n)^{d^22^d}|\mathcal{A}|^{(2\sqrt{d})^dP(\mathbf{x}_0)}
					\alpha^{\frac{\delta^{d} }{3^dd^d} P(\mathbf{x}_0)}.$$
If $\alpha < |\mathcal{A}|^{(\frac{6d^{3/2}}{\delta})^d}$ and 
$P(\mathbf{x}_0) \ge \vre n$, then this is at most
$$(3\vre n)^{d^22^d}(|\mathcal{A}|^{(2\sqrt{d})^d}\alpha^{\frac{\delta^{d} }{3^dd^d}})^{\vre n},$$ 
which tends to zero.
By our choice of $\alpha$, it follows that with probability tending to $1$, $P(\beta) < \vre n$,
and therefore $P(\mathbf{x}) < \vre n$ for all $\mathbf{x} \in \Z^d$.
The lemma then follows from the definition
of $P$, since if
$\{\mathbf{p}_1', \dots, \mathbf{p}_{k'}'\}$ are the period vectors in the definition of $P(\mathbf{x})$,
we have $\mathbf{p}_i' \in \Z^d$ which implies
$\vre n > n^{d-k'}\prod_{i=1}^{k'} \|\mathbf{p}_i'\| \ge n^{d-k'}$ and hence $k'=d$.\qedhere
\end{enumerate}
\ignore{
\begin{enumerate}
\item By induction
\item Given translate of $[1,n+\vre n]^d$, every translate of $[1, n-\vre n]^d$ inside it
is periodic with period at most $\vre n$ by previous lemma.
\item Find one that has the largest minimal period $p$ among these. There are at most
		$(\vre n)^{\text{ const}}|\mathcal{A}|^{n^{d-1}p}$ ways the whole thing can be colored.
\item If $B$ is any translate of $[1,n]^d$ such that the translates $B + \delta p_1 + w$ are 
		all inside the big hypercube and all colored differently, then probability to zero.
\item Otherwise, get a repeat with displacement almost orthogonal to $p_1$.
\item If $B$ had a lot of overlap with the maximal guy chosen above, then the repeated translate
		of $B$ will too.
\item But to insure that new period vector is longer than old one, need all the wiggles of $B$
		to contain the maximal-period translate of $[1,n-\vre n]^d$.
\item This is a problem of $B$ was touching the boundary.
\item How can we account for this?
\end{enumerate}

Fix $\alpha_0$ to be chosen later, small enough that the conclusion of Lemma \ref{distinct} holds.
Then every translate of $[1 + 2\vre n, n ]^d$ has a period vector of length at most $\vre n$.
Now suppose that for some $1 \le k < d$, every translate of $[1 + 2\vre n, n ]^d$
has $k$ period vectors $\mathbf{p}_1, \dots, \mathbf{p}_k$ of length at most $\vre n$
which satisfy $|\mathbf{p}_i \cdot \mathbf{p}_j| \le \|\mathbf{p}_i\| \|\mathbf{p}_j\| \delta$.
We prove that the analogous statement holds for $k+1$. We prove this by finite induction
on the volume of the paralellotope spanned by the local period vectors. Specifically,
first consider a translate $\mathbf{x}_0 + [1 + 2\vre n, n]$ such that any $k$ period vectors 
as above satisfy
$\prod_{i=1}^k \|\mathbf{p}_i\| \ge (\vre n)^k/2$. Note that since this product cannot be greater than $(\vre n)^k$
for any other translate, the number of colorings of $[1, n  + \vre n]^d$ is at most 
$(\vre n)^k |\mathcal{A}|^{2^d(\vre n)^k n^{d-k}}$.
Hence, the probability that the restrictions to 
$$\mathbf{x}_0 + [1 + \vre n, n+\vre n] + \prod_{i=1}^k [-\delta \mathbf{p}_i, \delta \mathbf{p}_i]
				\times [-\vre n, \vre n]^{d-k}$$
are all distinct is at most
$$\alpha^{(2\delta)^k (\vre n)^k/2 n^{d-k}} (\vre n)^k |\mathcal{A}|^{2^d(\vre n)^k n^{d-k}}.$$
If $\alpha^{(2\delta)^k/2^{d+1}} < |\mathcal{A}|$, then this tends to zero,
so we get a period vector $\mathbf{p}_{k+1}$ whose component in any of the directions
$\mathbf{p}_1, \dots, \mathbf{p}_k$ is at most $\delta\|\mathbf{p}_i\|$.

Let $\mathbf{x} = (\lfloor \vre n \rfloor, \dots, \lfloor \vre n\rfloor)$ and
for each $\mathbf{j} \in [0, \vre n]^d$, let $\beta_\mathbf{j}$ be given by 
$\beta_\mathbf{j}(\mathbf{i}) = \beta(\mathbf{x} + \mathbf{i} + \mathbf{j})$
for $\mathbf{i} \in [1, n]^d$.
For $1 \le k \le d$, let $P_n^k$ be the set of $\omega$ such that for any $\omega$-legal
coloring $\beta \colon [1,(1+2\vre)n]^d \to \mathcal{A}$
there exist pairs $(\mathbf{j}_1^{(1)},\mathbf{j}_2^{(1)}),
\dots, (\mathbf{j}_1^{(k)},\mathbf{j}_2^{(k)})$ such that if we set 
$\mathbf{p}_i = \mathbf{j}^{(i)}_1 - \mathbf{j}^{(i)}_2$
we have
\begin{enumerate}
\item $0 < \|\mathbf{p}_i\| \le \vre n$ for all $1 \le i \le k$
\item For each $1 \le j \le d$,
	$|\mathbf{p}_j \cdot \mathbf{p}_j| \le \|\mathbf{p}_j\| \|\mathbf{p}_i\| \delta$ for all $ 1 \le i < j \le k$
	and $\mathbf{p}_j$ is of minimal length among all such vectors.
\item $\beta_{\mathbf{j}^{(i)}_1} = \beta_{\mathbf{j}^{(i)}_2}$ for all $1 \le i \le k$
\end{enumerate}
We prove by induction that for any $0 < \alpha < \alpha_0$,
$\lim_{n \to \infty} \mu_\alpha(\Omega_n \setminus P_n^k) = 0$
for all $1 \le k \le d$.
By lemma \ref{distinct}, the statement holds for $k = 1$.
Now fix $1 \le k < d$ and suppose we have shown that 
$\lim_{n\to \infty} \mu_\alpha(\Omega_n \setminus P_n^k) = 0$.
Let $\omega \in P_n^k$ and let $\beta$ be an $\omega$-legal coloring of $[1,(1+ 2\vre) n]^d$.
Let $\mathbf{j}^{(i)}_1, \mathbf{j}^{(i)}_2$ (for $1 \le i \le k$) 
be the vectors guaranteed by the inductive hypothesis.
Let $$J = \left(\left\{\displaystyle\sum_{i=1}^k a_i \mathbf{p}_i \colon 0 \le a_i \le \delta\right\}
			+ \spn(\mathbf{p}_1, \dots, \mathbf{p}_k)^\perp\right) \cap [0,\vre n]^d.$$
This is the set of integer vectors of in $[0, \vre n]^d$ which have component less than $\delta \|\mathbf{p}_i\|$
in the direction $\mathbf{p}_i$ for each $1 \le i \le k$.
Suppose there do not exist distinct $\mathbf{j}^{(k+1)}_1, \mathbf{j}^{(k+1)}_2$
such that $\beta_{\mathbf{j}^{(k+1)}_1} = \beta_{\mathbf{j}^{(k+1)}_2}$.
Then there is an $\omega$-legal coloring $\gamma$ of $\bigcup_{\mathbf{j} \in J} ([1,n]^d + \mathbf{j})$
such that 
Note that $J$ is completely determined by
the vectors $\mathbf{j}^{(i)}_1, \mathbf{j}^{(i)}_2$ for $1 \le i \le k$.
Once these are fixed, 

For $1 \le k \le d$, let $P_n^k$ be the set of $\omega$ such that for any $\omega$-legal
coloring $\beta \colon [1,(1+2\vre)n]^d \to \mathcal{A}$
there exist nonzero integer vectors $\mathbf{p}_1, \dots, \mathbf{p}_k$
satisfying 
\begin{enumerate}
\item $\|\mathbf{p}_i\| \le \vre n$ for all $1 \le i \le k$
\item For each $1 \le j \le d$,
	$|\mathbf{p}_j \cdot \mathbf{p}_j| \le \|\mathbf{p}_j\| \|\mathbf{p}_i\| \delta$ for all $ 1 \le i < j \le k$
	and $\mathbf{p}_j$ is of minimal length among all such vectors.
\item $\beta(\mathbf{j} + \mathbf{p}_i) = \beta(\mathbf{j})$ for all $ \mathbf{j} \in [1 + \vre n, n + \vre n]^d$
			and $1 \le i \le k$
\end{enumerate}
We prove by induction that for any $0 < \alpha < \alpha_0$,
$\lim_{n \to \infty} \mu_\alpha(\Omega_n \setminus P_n^k) = 0$
for all $1 \le k \le d$.
By lemma \ref{distinct}, the statement holds for $k = 1$.
Now fix $1 \le k < d$ and suppose we have shown that 
$\lim_{n\to \infty} \mu_\alpha(\Omega_n \setminus P_n^k) = 0$.
Let $\omega \in P_n^k$ and let $\beta$ be an $\omega$-legal coloring of $[1,(1+ 2\vre) n]^d$.
Let 
$$T = [1,(1+2\vre)n]^d \cap \left( \left\{\sum_{i=1}^k a_i \mathbf{p}_i  \colon 0 \le a_i \le 1\right\} + \spn(\mathbf{p}_1, \dots, \mathbf{p}_k)^{\perp} \right)$$
Also let
$$T_\delta = [0, \vre n]^d \cap \left( \left\{\sum_{i=1}^k a_i \mathbf{p}_i  \colon 0 \le a_i \le \delta \right\}+ \spn(\mathbf{p}_1, \dots, \mathbf{p}_k)^{\perp} \right).$$
Then $T_\delta$ is the set of integer vectors of length at most $\vre n$
with component at most $\delta \|\mathbf{p}_i\|$ in the direction $\mathbf{p}_i$ for $1 \le i \le k$.
Let $\mathbf{x} = (\lfloor \vre n \rfloor, \dots, \lfloor \vre n\rfloor)$ and
for each $\mathbf{j} \in T_\delta$, let $\beta_\mathbf{j}$ be given by $\beta_\mathbf{j}(\mathbf{i}) = \beta(\mathbf{x} + \mathbf{i} + \mathbf{j})$
for $\mathbf{i} \in [1, n]^d$.
If the colorings $\beta_{\mathbf{j}}$ 
are distinct for $\mathbf{j} \in T_\delta$, then there exists an $\omega$-legal 
coloring $\beta$, periodic in the directions $\mathbf{p}_1, \dots, \mathbf{p}_k$
on $[1+\vre n, n + \vre n]^d$, with $|\omega| \ge |T_\delta|$.
Once the period vectors are fixed, this coloring is determined by its restriction to 
$T \cup ([1, (1+2\vre)n]^d \setminus [1+\vre n, n + \vre n]^d)$,
{\bf Note to self: Forgot about the second part of that union; rest of proof is flawed.}
so the probability of this event, for fixed $\mathbf{p}_1, \dots, \mathbf{p}_k$, at most
	$$|\mathcal{A}|^{n^{d-k} \prod_{i=1}^k \|\mathbf{p}_i\|} 
							\alpha^{c(\vre n)^{d-k} \prod_{i=1}^k (\delta \|\mathbf{p}_i\|}
	= (|\mathcal{A}\alpha^{c\vre^{d-k}\delta^{k}})^{n^{d-k} \prod_{i=1}^k \|\mathbf{p}_i\|}.$$
If $|\mathcal{A}\alpha^{c\vre^{d-k}\delta^{k}} < 1$, then this is less than 
$(|\mathcal{A}\alpha^{c\vre^{d-k}\delta^{k}})^{n^{d-k}}$ and hence the probability of such a $\beta$
existing is at most
$$(\vre n)^k (|\mathcal{A}\alpha^{c\vre^{d-k}\delta^{k}})^{n^{d-k}},$$
which tends to zero.
Thus, for some set $P_n^{k+1}$ with $\mu_{\alpha}(\Omega_n \setminus P_n^{k+1}) \to 0$,
there exist $\mathbf{j}_1 \neq \mathbf{j}_2 \in T_\delta$
such that $\beta_{\mathbf{j}_1} = \beta_{\mathbf{j}_2}$.
Let $\mathbf{p}_{k+1} = \mathbf{j}_1 - \mathbf{j}_2$, so $\mathbf{p}_{k+1} \in B(\mathbf{0}, \vre n)$.
Note that $\mathbf{p}_{k+1}$ is a period vector on $[1 + \vre n, n]^d$.
Then properties (i) and (iii) hold.
Now, by inductive hypothesis, we have $\|\mathbf{p}_{k+1}\| \ge \|\mathbf{p}_1\|$
and by construction the component of $\mathbf{p}_{k+1}$ in the direction $\mathbf{p}_1$
is at most $\delta\|\mathbf{p}_1\|$, so we have
$$|\mathbf{p}_1 \cdot \mathbf{p}_{k+1}\| \le \delta \|\mathbf{p}_1\|^2 \le \delta \|\mathbf{p}_1\|\|\mathbf{p}_{k+1}\|.$$
Using the minimality of $\mathbf{p}_2$ it then follows similarly that
$|\mathbf{p}_2 \cdot \mathbf{p}_{k+1}| \le \delta \|\mathbf{p}_2\|\|\mathbf{p}_{k+1}\|$.
Continuing in this way we see that 
$|\mathbf{p}_i \cdot \mathbf{p}_{k+1}| \le \delta \|\mathbf{p}_i\|\|\mathbf{p}_{k+1}\|$ for all $1 \le i \le k$. 
Replacing $\mathbf{p}_{k+1}$ if necessary we may assume that it is of minimal length among all
integer vectors with the above properties, which completes the inductive step of the proof.
\ignore{Fix $\delta, \vre > 0$. 
For $1 \le k \le d$, let $D_n^k$ be the set of $\omega$ for which there exists an $\omega$-legal 
coloring $\beta \colon [1,(1+\vre)n]^d \to \mathcal{A}$
such that for any nonzero integer vectors $\mathbf{p}_1, \dots, \mathbf{p}_k \in [0,\vre n]^d$,
either $\mathbf{p}_i$ is not a period vector for $\beta$ on $[1+\vre n, n]^d$ for some $1 \le i \le k$
or $|\mathbf{p}_i \cdot \mathbf{p}_j| > \|\mathbf{p}_i\|\|\mathbf{p}_j\|\delta$ 
for some $1 \le i < j \le k$.
We claim that $\lim_{n\to \infty} \mu_\alpha(D_n^d) = 0$.
Suppose not and let $k$ be the minimal integer in $[1,d]$ such that 
$\lim_{n\to \infty} \mu_\alpha(D_n^k) > 0$.
By Lemma \ref{distinct}, $k > 1$.
Now, if $\omega \in D_n^k \setminus D_n^{k-1}$
and $\beta$ is the $\omega$-legal coloring with the above properties, 
there exist nonzero integer vectors $\mathbf{p}_1, \dots, \mathbf{p}_{k-1}$ such that 
each $\mathbf{p}_i$ is a period vector for $\beta$ on $[1+\vre n, n ]^d$,
and 
$|\mathbf{p}_i \cdot \mathbf{p}_j| \le \|\mathbf{p}_i\|\|\mathbf{p}_j\|\delta$
for each $1 \le i < j \le k-1$.
Let $\mathbf{p}_{k}, \dots, \mathbf{p}_d$ be (possibly non-integer) unit vectors orthogonal to 
the span of $\mathbf{p}_1, \dots, \mathbf{p}_{k-1}$.
Let $\mathbf{c} = (n/2, n/2, \dots, n/2) \in [1,n]^d$ and define
$$T_\delta = \left\{\mathbf{c} + \sum_{i=1}^d a_i \mathbf{p_i} \colon 0 \le a_i \le \delta \text{ for }
	1\le i \le k-1 \text{ and }0 \le a_i \le \vre n\text{ for }k \le i \le d\right\},$$
$$T = \left\{\mathbf{c} + \sum_{i=1}^d a_i \mathbf{p_i} \colon 0 \le a_i \le 1 \text{ for }
	1\le i \le k-1 \text{ and }0 \le a_i \le \sqrt{d} n\text{ for }k \le i \le d\right\},$$
and
$$B = T \cup ([1,(1+\vre)n]^d \setminus [1+\vre n, n]^d).$$
Note that after fixing $k$ and the vectors $\mathbf{p}_i$, 
$\beta$ is determined by its restriction to $B$, 
so there are at most $d (\vre n)^{d^2} |\mathcal{A}|^{(\vre n)^{k-1}(\sqrt{d}n)^{d-k+1} + 2d (\vre n)^{d-1}}$
such colorings $\beta$.
If $\beta_\mathbf{j}$ are the colorings $\beta_\mathbf{j}(\mathbf{i}) = \beta(\mathbf{i} + \mathbf{j})$,
then the probability that such a $\beta$ exists and has
$\beta_{\mathbf{j}}$ distinct for each $\mathbf{j} \in T_\delta$ is, for large $n$, at most
$$d (\vre n)^{d^2} |\mathcal{A}|^{(2d+1)\vre^{k-1}n^d}\alpha^{\frac12(\vre n)^d\delta^{k-1}}
		= d(\vre n)^{d^2}(|\mathcal{A}|^{(2d+1)d^{d/2}\vre^{k-1}}\alpha^{\frac12\vre^d\delta^{k-1}})^{n^d}.$$
Thus, if $\alpha < |\mathcal{A}|^{-\frac{2d^{d/2}(2d+1)}{\vre^{d}\delta^{d}}}$,
then this probability tends to zero.
Hence, there exist $\mathbf{j}, \mathbf{j}' \in T_\delta$
such that $\beta_{\mathbf{j}} = \beta_{\mathbf{j}'}$
But then $|(\mathbf{j} - \mathbf{j}') \cdot \mathbf{p}_i|$

By lemma \ref{distinct} there exists a nonzero integer vector $\mathbf{p}_1 \in [0, \vre n]^d$
such that $\beta(\mathbf{j} + \mathbf{p}_1) = \beta(\mathbf{j})$ for all $\mathbf{j} \in [\vre n, n - \vre n]^d$.
We may assume that $\mathbf{p}_1$ is minimal in the sense that it cannot be replaced by any integer vector
$a \mathbf{p}_1$ with $0 < a < 1$.
Say that two points $\mathbf{j}_1, \mathbf{j}_2 \in [\vre n, n - \vre n]^d$ are equivalent if
$\mathbf{j}_1 - \mathbf{j}_2 \in \Z \mathbf{p}_1$ and let $J \subset [\vre n, n - \vre n]^d$ be a set 
containing exactly one point from each equivalence class. 
Note that $|J| \le (\vre n)^{d-1}$.
Let $A_{\mathbf{p}, n}$ be the set of $\omega \in \Omega_n$ such that 
there is an $\omega$-legal coloring $\beta$ of $[1,n]^d$ which is periodic with (minimal) period vector $\mathbf{p}$
on $[\vre n, n - \vre n]^d$ but 
\begin{equation}
\label{nonparallel-period}
\beta|_{[2\vre n, n - 2\vre n]^d}\text{ does not have a period vector }\mathbf{p}' \in [0, \vre n]^d\text{ which is not parallel to }\mathbf{p}.
\end{equation}
Then, for large $n$ and for small enough $\alpha$,
$$\mu_n(A_{\mathbf{p}, n}) \le |A_{\mathbf{p}, n}| \alpha^{(\vre n)^d - \vre n} \le |\mathcal{A}|^{\vre n^d} \alpha^{\frac12(\vre n)^d}
		\to 0,$$
so we may assume there exists $\mathbf{p}_2 \in [0, \vre n]^d$ which is not parallel to $\mathbf{p}_1$.
Arguing similarly, the lemma follows by finite induction.

Now, given $\mathbf{p} \in \Z^d$ and $0 < \delta < 1$, let $D_{\mathbf{p}, \delta}$ be a rectangle which has length 
$\delta\|\mathbf{p}\|$ in the $\mathbf{p}$-direction and length $\vre n$ in $d-1$ orthogonal directions.
If, in the definition of $A_{\mathbf{p}, n}$ above, we replace \ref{nonparallel-period}
with the condition
\begin{equation}
\label{nonparallel-period2}
\beta|_{D_{\mathbf{p}, \delta}}\text{ does not have a period vector }\mathbf{p}' \in [0, \vre n]^d\text{ which is not parallel to }\mathbf{p}
\end{equation}
then we obtain a set $A_{\mathbf{p}, n, \delta}$ which also tends to zero in measure for sufficiently small $\alpha$.
But any vector $\mathbf{p}' \in D_{\mathbf{p}, \delta}$ has $\proj_{\mathbf{p}} \mathbf{p}' < \delta\|\mathbf{p}\|$,
so if in the argument above $\mathbf{p}_1$ is chosen to be smallest possible, then the vector $\mathbf{p}_2$ we obtain 
will have angle at least $\arctan\left( \frac{\sqrt{1-\delta^2}}{\delta}\right)$. Choosing $\delta$ sufficiently small ensures
that this will be at least $\theta$. Again finite induction completes the proof.
}
}
\end{proof}

The proof of the general case of Theorem \ref{aperiodic} consists of a ``local-to-global'' argument extending the local periodicity
guaranteed in the previous lemma to all of $\Z^d$.

\begin{proof}[Proof of Theorem \ref{aperiodic}]
Fix $ \vre < (2d)^{-5/2}$ and $\delta = \frac{1}{2d^2}$. Let
$\alpha_0 = \alpha_0(|\mathcal{A}|, d, \vre, \delta)$ be as in Lemma \ref{aperiodic-local}
and let $0 < \alpha < \alpha_0$.
(Note that $\vre$ and $\delta$ both depend only on $d$, so $\alpha_0$ is
a function of $|\mathcal{A}|$ and $d$ as in the statement of the theorem.)
By Lemma \ref{aperiodic-local} and Remark \ref{bounded-period=finite}, it will suffice to show that
whenever $\omega \in F_n(\vre, \delta)$, every orbit in $X_\omega$ is periodic in each cardinal direction
with period at most $(2^dn)^{2^d}$.
Let $\omega \in F_n(\vre, \delta)$ and let $\eta \in X_\omega$.
Since $\omega \in F_n(\vre, \delta)$, by Lemma \ref{lin-ind} there exist linearly independent
integer vectors $\mathbf{p}_1, \dots, \mathbf{p}_d$ with $\|\mathbf{p}_i\| \le \vre n$
such that for any $1 \le i \le d$ and any $\mathbf{j} \in B \df [1+2\vre n, n]^d$, 
$\eta(\mathbf{j} + \mathbf{p}_i) = \eta(\mathbf{j})$.
Suppose for contradiction that $\eta$ is not periodic with the same period vectors on
$B + \mathbb{R}_{\ge 0}\mathbf{p}_1$.
Then there exists a minimal $t \in [0,\infty)$ such that $\eta(\mathbf{j} -\mathbf{p}_i) \neq \eta(\mathbf{j})$
for some integer vector $\mathbf{j} \in B + t\mathbf{p}_1$.
\ignore{Let $B' = B + t \mathbf{p}_1 + \frac{n}{2\|\mathbf{p}_1\|} \mathbf{p}_1$.
If $B(\mathbf{j}_0, 2\vre n) \subset B'$, then Lemma \ref{aperiodic-local}
guarantees a nonzero period vector $\mathbf{p}'$ for $\eta$ on $B'$ and either $\mathbf{j}_0 + \mathbf{p}'$
or $\mathbf{j}_0 - \mathbf{p}'$ lies in the set $B + t'\mathbf{p}_1$ for some $t'<t$. Without loss of generality, we
may assume the former, so we have
that $$\eta(\mathbf{j}_0) = \eta(\mathbf{j}_0 + \mathbf{p}') = \eta(\mathbf{j}_0 + \mathbf{p}' - \mathbf{p}_1)
		= \eta(\mathbf{j}_0 - \mathbf{p}_1),$$
which is a contradiction.
If $B(\mathbf{j}_0, 2\vre n) \not\subset B'$,
then again using}
Let $\mathbf{j}_0 \in (B + t\mathbf{p}_1) \setminus \cup_{t' < t} (B + t'\mathbf{p}_1)$
 satisfy
$\eta(\mathbf{j}_0 -\mathbf{p}_i) \neq \eta(\mathbf{j}_0)$,
and let $B'$ be an integer translate of $[1+3\vre n, n - \vre n]^d$ 
with center at most distance $\sqrt{d}$ from $\mathbf{j}_0$.
We may assume $n$ is large enough that $\sqrt{d} < \frac12 n$.
Since $\delta < \frac{1}{2(d-1)}$, there exist 
linearly independent integer vectors $\mathbf{p}_1', \dots, \mathbf{p}_d'$ with $\|\mathbf{p}_i'\| \le \vre n$
such that 
$$\eta(\mathbf{j} + \mathbf{p}_i') = \eta(\mathbf{j} - \mathbf{p}_i') = \eta(\mathbf{j})$$
for every $\mathbf{j} \in B'$ and
\begin{equation}
\label{almost-orthogonal}
|\mathbf{p}_i'\cdot \mathbf{p}_j'| \le \dfrac{\|\mathbf{p}_i'\|\|\mathbf{p}_j'\|}{2(d-1)}\text{ for }1 \le i < j \le d.
\end{equation}
Indeed, Lemma \ref{aperiodic-local} asserts that we may find such vectors which satisfy 
$\eta(\mathbf{j}) = \eta(\mathbf{j} + \mathbf{p}_i')$
on a given translate of $[1 + 2\vre n, n]^d$, which implies
$\eta(\mathbf{j} + \mathbf{p}_i') = \eta(\mathbf{j} - \mathbf{p}_i') = \eta(\mathbf{j})$
on the corresponding translate of $[1+3\vre n, n - \vre n]^d$.
We claim there exists a sequence of indices $1 \le i_j\le d$ 
and integers $\sigma_j \in \{-1,1\}$ for $1 \le j \le J$
 such that
\begin{enumerate}
	\item  For each $1 \le \ell \le J$, 
	$\mathbf{j}_0 + \displaystyle\sum_{j=1}^{\ell} \sigma_j\mathbf{p}_{i_j}' \in B'$ 
			and $\mathbf{j}_0 - \mathbf{p}_1 + 
			\displaystyle\sum_{j=1}^{\ell} \sigma_j\mathbf{p}_{i_j}' \in B'$ \label{chain1}
	\item $\mathbf{j}_0 + \displaystyle\sum_{j=1}^{J} \sigma_j\mathbf{p}_{i_j}' \in B + t'\mathbf{p}_1$ 
	for some $t' < t$. \label{chain2}
\end{enumerate}
If we prove this, then we will have
$$\eta(\mathbf{j_0}) = \eta\left(\mathbf{j}_0 + \displaystyle\sum_{j=1}^{J} \sigma_j\mathbf{p}_{i_j}'\right)
				= \eta\left(\mathbf{j}_0 + \displaystyle\sum_{j=1}^{J} \sigma_j\mathbf{p}_{i_j}' - \mathbf{p}_1\right)
				= \eta(\mathbf{j}_0 - \mathbf{p}_1),$$
which is a contradiction.
To prove the claim, 
let $\mathbf{y}'$ be the center of $B + t\mathbf{p}_1$ and let
$\mathbf{y} = \mathbf{j}_0 + 2d^2\vre n\dfrac{\mathbf{y}' - \mathbf{j}_0}{\|\mathbf{y}' - \mathbf{j}_0\|}$.
Note that the distance from $\mathbf{y}$ to the boundary of $B + t\mathbf{p}_1$
is at most $\frac{2d^2\vre n}{\sqrt{d}} = 2d^{3/2}\vre n$
and hence
\begin{equation}
\label{dist-y-to-boundary}
B(\mathbf{y}, 2d^{3/2}\vre n) \subset B + t\mathbf{p}_1.
\end{equation}
There exist $x_1, \dots, x_d \in \R$ such that 
$\mathbf{j}_0 + \displaystyle\sum_{i=1}^d x_i \mathbf{p}_i' = \mathbf{y}$. 
We will choose
our indices so that $i_j = i$ for approximately $|x_i|$ values of $j$, but to ensure that 
(\ref{chain1}) holds we need to show that the coefficients $x_i$ are small relative to the 
distance $\|\mathbf{y} - \mathbf{j}_0\| = 2d^2\vre n$.
Note that 
$$\left\|\displaystyle\sum_{i=1}^d x_i \mathbf{p}_i' \right\|^2 = \displaystyle\sum_{i=1}^d x_i^2 \|\mathbf{p}_i'\|^2
+ \displaystyle\sum_{i\neq j} (x_ix_j \mathbf{p}_i' \cdot \mathbf{p}_j').$$
By (\ref{almost-orthogonal}),
we have that $|x_ix_j \mathbf{p}_i' \cdot \mathbf{p}_j'| \le \frac{|x_i||x_j|\|\mathbf{p}_i'\|\|\mathbf{p}_j'\|}{2(d-1)}$,
so $$\|\mathbf{y} - \mathbf{j}_0\|^2 \ge 
 \displaystyle\sum_{i=1}^d x_i^2 \|\mathbf{p}_i'\|^2 - \frac{1}{2(d-1)}\displaystyle\sum_{i \neq j} (|x_i|\|\mathbf{p}_i'\|)(|x_j|\|\mathbf{p}_j'\|).$$
Using Lagrange multipliers it is easy to see that,
subject to the constraint that the right-hand side is at most $2d^2 \vre n$,
$\sum_{i = 1}^d |x_i|\|\mathbf{p}_i'\|$ is maximized when 
$$|x_1|\|\mathbf{p}_1'\| = |x_2|\|\mathbf{p}_2'\|= \dots = |x_d|\|\mathbf{p}_d'\| \df x,$$ 
in which case we have
$$2d^2\vre n \ge \sqrt{dx^2 - \frac{d}{2}x^2} = \sqrt{\frac{d}{2}} x = \frac{d}{\sqrt{2d}} x = 
\frac{1}{\sqrt{2d}} \sum_{i=1}^d (|x_i|\|\mathbf{p}_i'\|).$$
Now define $\mathbf{z} = \displaystyle\sum_{i=1}^d \overline{x_i} \mathbf{p}_i'$,
where $\overline{x_i} = \lfloor{x_i}\rfloor$ if $x_i \ge 0$ and $\overline{x_i} = \lceil{x_i}\rceil$ if $x_i \le 0$.
Let $J = \sum_{i=1}^d |\overline{x_i}|$ and for 
$\sum_{i=1}^{m-1} |\overline{x_i}| < j \le \sum_{i=1}^{m} |\overline{x_i}|$
set $i_j = m$ and $\sigma_j = \frac{\overline{x_m}}{|\overline{x_m}|}$.
Then $\mathbf{z} = \sum_{i=1}^J \sigma_j \mathbf{p}_{i_j}'$ and
$\|\mathbf{z} - \mathbf{y}\| \le \sum_{i=1}^d \|\mathbf{p}_i'\| \le d \vre n$,
so by (\ref{dist-y-to-boundary}) $\mathbf{z}$ is contained in the interior of $B + t\mathbf{p}_1$,
 so (\ref{chain2}) holds.
Also, for each $1 \le \ell \le J$,
$$\left\|\mathbf{j}_0 - \sum_{i=1}^\ell \sigma_j \mathbf{p}_{i_j}'\right\| \le \sum_{i=1}^d |\overline{x_i}| \|\mathbf{p}_i'\| \le \sum_{i=1}^d |{x_i}| \|\mathbf{p}_i'\| \le 
2^{3/2}d^{5/2}\vre n.$$
Since we assume
$2^{3/2}d^{5/2} \vre < 1/2$, 
we have that the distance from $\sum_{i=1}^\ell \sigma_j \mathbf{p}_{i_j}'$
to the center of $B'$ is at most $\frac12n + \sqrt{d} < \frac12n + \frac12n = n$,
so (\ref{chain1}) follows
and the proof of the claim is complete.
Hence, $\eta$ is periodic with the same period vectors on $B + \R_{\ge 0}\mathbf{p}_1$.
Using the same argument, we see that $\eta$ is periodic with period vectors 
$\mathbf{p}_1,\dots, \mathbf{p}_d$
on $B + \R\mathbf{p}_1 + \R\mathbf{p}_2 + \dots + \R\mathbf{p}_d$
and hence on all of $\Z^d$.

Now, if $\mathbf{p}_i = (p^{(0)}_{i,1}, \dots, p^{(0)}_{i,d}) \in \Z^d$,
then for each $1 \le i \le d-1$, let
$$\mathbf{p}_i^{(1)} = (p^{(1)}_{i,1}, \dots, p^{(1)}_{i,d})
		\df  p^{(0)}_{i,1}\mathbf{p}_d - p^{(0)}_{d,1}\mathbf{p}_i.$$
We obtain $d-1$ period vectors with $\mathbf{e}_1$-component equal to zero
and $\|\mathbf{p}_i^{(1)} \| \le 2(\vre n)^2 \le (2\vre n)^2$.
Now suppose for $1 \le k \le d-1$ we have $d-k$ period vectors for $\eta$ satisfying
$\|\mathbf{p}_i^{(k)}\| \le (2^{k}\vre n)^{2^{k}}$ and $\mathbf{e}_i \cdot \mathbf{p}_i^{(k)} = 0$
for $1 \le i \le k$.
For $1 \le i \le d - (k+1)$, 
define $$\mathbf{p}_i^{(k+1)} = (p^{(k+1)}_{i,1}, \dots, p^{(k+1)}_{i,d})
			\df  p^{(k)}_{i,k+1}\mathbf{p}_{d-k}^{(k)} - p^{(k)}_{d-k,k+1}\mathbf{p}_i^{(k)}.$$
These are $d- (k+1)$ period vectors with 
$\|\mathbf{p}_i^{(k+1)}\| \le 2(2^k\vre n)^{2^{k+1}} \le (2^{k+1}\vre n)^{2^{k+1}}$ and 
$\mathbf{e}_i \cdot \mathbf{p}_i^{(k+1)} = 0$
for $1 \le i \le k+1$. By finite induction, we obtain a period vector parallel to $\mathbf{e}_d$
with period at most $(2^d\vre n)^{2^{d}}$. Arguing similarly, we can produce a period vector in
each of the cardinal directions with period at most $(2^d\vre n)^{2^{d}}$.
By Remark \ref{bounded-period=finite}, this completes the proof.
\ignore{By Lemma \ref{distinct}, we may assume that for any $\vre > 0$ and any coloring $\eta$ of $[1, n + \vre n]^d$,
there exist $\mathbf{p} \in [0, \vre n]^d$ such that $\eta(\mathbf{j}) = \eta(\mathbf{j} + \mathbf{p})$
for all $\mathbf{j} \in [1, 1+n]^d$. 
Let $\eta \in \mathcal{A}^{\Z^d}$. 
Then there exist $\mathbf{j}_0, \mathbf{p} \in [0, n\vre]^d$ such that
$\eta(\mathbf{j}_0 + \mathbf{j} + \mathbf{p}) = \eta(\mathbf{j}_0 + \mathbf{j})$
for all $\mathbf{j} \in [1, n - \vre n]^d$.

\ignore{
Let $B = [1,n]^d$ and let $B_\vre = [1 + n\vre, n - n\vre]^d$.
Given an integer vector $\mathbf{p} \in [0,\vre n]^d$ and a closed halfspace $H$
with boundary perpendicular to some standard basis vector $\mathbf{e}_i$
satisfying $\varnothing \neq H \cap B_\vre \neq B_\vre$,
say that $\beta : B \to \mathcal{A}$ is \emph{$(\mathbf{p}, H)$-semi-periodic} if
it is periodic with period $\mathbf{p}$
on $H \cap B_\vre$ but not on $B_\vre$.
For each $\mathbf{j} \in \Z^d$, let $N(\mathbf{j}) \in \Z_{\ge 0}$ be the number of 
distinct pairs $(\mathbf{p}, H)$ such that $\eta|_{B+\mathbf{j}_0}$ is $(\mathbf{p}, H)$-semi-periodic.
Since there are only finitely many integer vectors in $[0, \vre n]^d$ and finitely many affine hyperplanes
perpendicular to a standard basis vector and passing through a point in $B_\vre$, 
$\max_{\mathbf{j} \in \Z^d} N(\mathbf{j}) \in \Z_{\ge 0}$
is well-defined. Let $\mathbf{j}_0 \in \Z^d$ be a point where this maximum is achieved,
and let $(\mathbf{p}_1, H_1), \dots, (\mathbf{p}_m, H_m)$ (for $m = N(\mathbf{j}_0$)
be the distinct pairs for which it is semi-periodic.
By the argument above, there exists $\mathbf{p} \not\in \{\mathbf{p}_1, \dots, \mathbf{p}_m\}$
such that $\eta|_{B_{\vre} + \mathbf{j}_0}$ is periodic with period vector $\mathbf{p}$.
}

Let $B = [1,n]^d$ and let $B_\vre = [1 + n\vre, n - n\vre]^d$.
Given an integer vector $\mathbf{p} \in [0,\vre n]^d$ and a closed halfspace $H$
with boundary perpendicular to $\mathbf{p}$
satisfying $\varnothing \neq H \cap B_\vre \neq B_\vre$,
say that $\beta : B \to \mathcal{A}$ is \emph{$\mathbf{p}$-semi-periodic} if
it is periodic with period $\mathbf{p}$
on $H \cap B_\vre$ but not on $B_\vre$.
For each $\mathbf{j} \in \Z^d$, let $N(\mathbf{j}) \in \Z_{\ge 0}$ be the number of 
distinct integer vectors $\mathbf{p} \in [0,\vre n]^d$ such that $\eta|_{B+\mathbf{j}_0}$ is $\mathbf{p}$-semi-periodic.
Since there are only finitely many integer vectors in $[0, \vre n]^d$, 
$\max_{\mathbf{j} \in \Z^d} N(\mathbf{j}) \in \Z_{\ge 0}$
is well-defined. Let $\mathbf{j}_0 \in \Z^d$ be a point where this maximum is achieved,
and let $\mathbf{p}_1, \dots, \mathbf{p}_m$ (for $m = N(\mathbf{j}_0)$)
be the distinct integer vectors for which it is semi-periodic.
By the argument above, there exists $\mathbf{p} \not\in \{\mathbf{p}_1, \dots, \mathbf{p}_m\}$
such that $\eta|_{B_{\vre} + \mathbf{j}_0}$ is periodic with period vector $\mathbf{p}$.
Note that for every $k \in \Z$, $\eta|_{B_{\vre} + \mathbf{j}_0 + k\mathbf{p}}$
is also $\mathbf{p}_i$-semi-periodic for each $1 \le i \le m$.
Thus, by the maximality of $N(\mathbf{j}_0)$, $\eta|_{B_{\vre} + \mathbf{j}_0 + k\mathbf{p}}$ cannot
be $\mathbf{p}$-semi-periodic and therefore $\beta$ must be periodic on $B_\vre + \mathbf{j}_0 + k\mathbf{p}$
with period vector $\mathbf{p}$, so $\eta$ is periodic on a ``tube'' $T = \ell^{(\frac{n - 2\vre n}{2})}$ where $\ell$ is some line
parallel to $\mathbf{p}$.
Let $\mathbf{u}$ be a point in $\Z^2 \setminus T$ of minimal distance to $T$. 
Note that this distance is bounded below by a number $D > 0$ depending only on $\mathbf{p}$ and not on the thickness of $T$.
Let $B_\vre'$ be a translate of $B_\vre$
such that $\mathbf{u}$ is within $\vre n$ of the center of $B_\vre'$. Then $\eta$ is periodic on $B_\vre'$
with period vector $\mathbf{p}' \neq \mathbf{p}$. 
{\bf What if $\mathbf{p}'$ is perpendicular to $\mathbf{p}$. e.g. skew lines example}
But 
there exists $\mathbf{u}' = \mathbf{u} + a\mathbf{p}'$ with $\mathbf{u}'$ and $\mathbf{u}' + \mathbf{p}$ contained in $T$, so that
$$\eta(\mathbf{u}) = \eta(\mathbf{u}') = \eta(\mathbf{u}' + \mathbf{p}) = \eta(\mathbf{u} + \mathbf{p}).$$
Since $\mathbf{u}$ was an arbitrary point of minimal distance, $\eta$ is in fact periodic with period vector $\mathbf{p}$
on $T^{(D)}$. Repeating the argument show that it is periodic also on $T^{(2D)}$. By induction,
$\eta$ is periodic on all of $\Z^2$.
}
\end{proof}
\ignore{
As mentioned in the introduction, it is straightforward to show that our result implies
zero entropy. We include a proof for the convenience of the reader.

\begin{corollary}
Let $\alpha_0 = \alpha_0(d,|\mathcal{A}|) > 0$ be as in Theorem \ref{}.
Then for $0 < \alpha < \alpha_0$,
$$\lim_{n \to \infty} \mu_{n,\alpha}($|X_\omega| < \infty$)  = 1.$$
In particular, with probability tending to $1$, $X_\omega$ has zero entropy.
\end{corollary}

\begin{proof}
Let us denote the orbit of $\eta \colon \Z^d \to \mathcal{A}$ by 
$\orb(\eta) \df \{T^{\mathbf{n}}(\eta) \colon \mathbf{n} \in \Z^d\}$.
Note that there are only finitely many colorings of $[-1,1]^d$.
Thus, there exists $\beta_1 \colon [-1,1]^d \to \mathcal{A}$ such that
$$|\{\eta \in X_\omega \colon \eta|_{[-1,1]^d = \beta_1} \text{ and } |\orb(\eta)| \ge 1\}| = \infty.$$
(Note that the second condition on $\eta$ is trivial since $\eta \in \orb(\eta)$.)
Now suppose that for some $k \in \N$ we have chosen 
$\beta_k \colon [-k,k]^d \to \mathcal{A}$ such that
\begin{equation}
\label{beta-k}
|\{\eta \in X_\omega \colon \eta|_{[-k,k]^d = \beta_k} \text{ and } |\orb(\eta)| \ge k\}| = \infty.
\end{equation}
There are only finitely many functions $\beta \colon [-(k+1), k+1]^d \to \mathcal{A}$
with $\beta|_{[-k,k]^d} = \beta_k$,
and there are only finitely many $\eta \colon \Z^d \to \mathcal{A}$ with $\orb(\eta) < k+1$,
so by the inductive hypothesis, there must exist 
$\beta_{k+1} \colon [-(k+1), k+1]^d \to \mathcal{A}$ with $\beta_{k+1}|_{[-k,k]^d} = \beta_k$
such that (\ref{beta-k}) holds with $k$ replaced by $k+1$.
By induction, there exists such a sequence of functions $\beta_k$.
Now define $\eta \colon \Z^d \to \mathcal{A}$ via $\eta(\mathbf{i}) = \beta_k(\mathbf{i})$
for $k \ge \|\mathbf{i}\|$. By construction, $\eta$ is well-defined and is an element of $X_\omega$.
\end{proof}
}

\section{Open questions}
\label{entropy}

\ignore{

\begin{proof}[Proof of Theorem \ref{aperiodic}]
By Lemma \ref{distinct}, we may assume that for any $\vre > 0$ and any coloring $\eta$ of $[1, n + \vre n]^d$,
there exist $\mathbf{p} \in [0, \vre n]^d$ such that $\eta(\mathbf{j}) = \eta(\mathbf{j} + \mathbf{p})$
for all $\mathbf{j} \in [1, 1+n]^d$. 
Let $\eta \in \mathcal{A}^{\Z^d}$. Then by the above there exists $\mathbf{p} = (p_1, \dots, p_d) \in [0, \vre n]^d$
such that $\eta(\mathbf{j}) = \eta(\mathbf{j} + \mathbf{p})$ for all $\mathbf{j} \in [1, n]^d$.
Suppose $\eta$ is not periodic on $[1,n]^d + \mathbf{p}\Z_{\ge 0}$.
Let $m \ge 1$ be the smallest integer such that $\eta(\mathbf{j}) \neq \eta(\mathbf{j} + \mathbf{p})$
for some $\mathbf{j} \in [1, n]^d + m\mathbf{p}$, and let $\mathbf{b}$ be the closest such $\mathbf{j}$
to $(n, n,\dots, n) + m\mathbf{p}$ (if there are several points of minimal distance, 
we may choose the lexicographically minimal one).
Put an ordering on the
points in $[1,n]^d$ such that $\mathbf{j}_1 < \mathbf{j}_2$ whenever $\mathbf{j}_2$ is closer to the boundary
of $[1,n]^d$ than $\mathbf{j}_1$ is. Enumerate the points in $[1,n]^d$ $\mathbf{k}_1, \dots, \mathbf{k}_{n^d}$
according to this ordering.
Given a coloring $\beta \in \mathcal{A}^{[1,n]^d}$, define
$i(\beta)$ to be the minimal integer $i$
such that $\beta(\mathbf{k}_i) \neq \beta(\mathbf{k}_i - \mathbf{p})$. Clearly,
if $i(\beta_1) \neq i(\beta_2)$, then $\beta_1 \neq \beta_2$.
Let $\beta_i$ be the restriction of $\eta$ to $[1,n]^d + (\mathbf{b} - \mathbf{k}_i)$. Then
$i(\beta_i) = \mathbf{k}_i$ for $\mathbf{k}_i$ sufficiently far from the boundary of $[1,n]^d$, say
$1 \le i \le n^d/2$. By Lemma \ref{distinct}, we thus may assume that $\eta$ is periodic on 
$[1,n]^d + \mathbf{p}\Z_{\ge 0}$ and, by similar arguments, periodic on $[1,n]^d + \mathbf{p}\Z_{\le 0}$,
both with period vector $\mathbf{p}$.

Now let $[1,n]^d + \mathbf{j}$ disjoint from $[1,n]^d + \mathbf{p}\Z$. Arguing as above, there
is some $\mathbf{p}' \in [0, \vre n]^d$ such that $\eta$ is periodic with period vector $\mathbf{p}'$
on $[1,n]^d + \mathbf{p}'\Z$. If, for each translate $[1,n]^d + \mathbf{j}$ the period vector $\mathbf{p}'$ is
parallel to $\mathbf{p}$, then $\eta$ is periodic with period vector $(\lfloor n\vre \rfloor)! \mathbf{p}$.
So we may assume $\mathbf{p}'$ is not parallel to $\mathbf{p}$. Let $T = ([1,n]^d + \mathbf{p}\Z)^{(\delta)}$
be the largest thickening such that $\eta$ is periodic on $T$ with period vector $\mathbf{p}$,
and define $T'$ analogously. Let $\mathbf{b}' \in \Z^d\setminus ($ be a point on the intersection of the boundaries of $T$ and $T'$.

\end{proof}
}

In \cite{MP}, it is shown that if $\alpha < 1/|\mathcal{A}|$, then for each $\vre > 0$
the probability that the entropy of $X_\omega$ is at least $\vre$ tends to zero as $n$ tends to infinity.
However it is still possible that the probability of $X_\omega$ having zero entropy tends to zero
as well. Even if the entropy is generically zero, this leaves open questions about directional entropy
and periodicity. As with the definition of entropy given above,
we may also define directional entropy using complexity.

\begin{defn}
If $\omega \in \Omega_n^d$; $m \in \{1, 2, \dots, d-1\}$;
$\mathbf{u}_1, \dots, \mathbf{u}_m$ are linearly independent unit vectors in $\R^d$;
and $V = \spn(\mathbf{u}_1, \dots, \mathbf{u}_m)$, set
$$R_{V,k,t} = \left\{\sum_{i=1}^m a_i \mathbf{u}_i + \sum_{i=m+1}^d b_i \mathbf{u}_i
			\colon a_i \in [0,k], b_i \in [0,t]\right\},$$
where $\mathbf{u}_i$ ($m+1\le i \le d$) are unit vectors orthogonal to $V$ which complete
a basis for $\R^d$. The $m$-dimension (topological) directional entropy in direction $V$ is
$$h_V(X_\omega) = \sup_{t > 0} \lim_{k \to \infty} \dfrac{\log(P_\omega(R_{V,k,t} \cap \Z^d))}{k^m}.$$
\end{defn}

Again, it is straightforward to show that this definition coincides 
with the general definition of topological directional entropy given in \cite{Mi}.
If $h(X_\omega) > 0$, then $h_V(X_\omega) = \infty$ for all proper subspaces 
$V \subset \R^d$. \ignore{To see this, suppose $h_V(X_\omega) \le \log a$ for some $a > 1$. Then
for each $t > 0$ there exists $k_t$ such that for $k \ge k_t$, we
have $P_\omega(R_{V, k, t} \cap \Z^d) \le a^{k^m}$. But $[1,k]^d \cap \Z^d$
can be written as a union of $c\left(\frac{k}{t}\right)^{d-m}$ translates of $R_{V, k, t}\cap \Z^d$,
where $c > 0$ is independent of $k$ and $t$. Hence,
$P_\omega(k) \le (a^{k^m})^{ck^{d-m}/t^{d-m}}  = a^{ck^d/t^{d-m}}$ 
for all sufficiently large $k$ and hence,
$h(X_\omega) \le \frac{c \log a}{t^{d-m}}$. Since $t$ was arbitrary,
$h(X_\omega) = 0$.} But if $h(X_\omega) = 0$, 
the directional entropy may be zero, positive, or even infinite in any given direction.
Ledrappier's three-dot system (see \cite{L}) provides an example of a zero-entropy SFT with positive but finite directional entropy in all directions. 
The system corresponding to $\omega = \{\beta\} \in \Omega_2^2$, where $\beta(i,j) = 1$ for $1 \le i,j \le 2$
(i.e. the SFT containing only a single constant coloring of $\Z^2$) is of course an example
where the directional entropy is zero in all directions.
The following example shows that it is possible to have infinite entropy in all directions as well.

\begin{example}
Set $\mathcal{A} = \{0, 1, 2, 3\}$. Let
$$\omega_1 = \{ \eta \in \{0,1\}^{[1,2]^2} \colon \eta(i,1) = 0 \Rightarrow \eta(i,2) = 0\},$$
$$\omega_2 = \{ \eta \in \{2, 3\}^{[1,2]^2} \colon \eta(1,i) = 2 \Rightarrow \eta(2, i) = 2\},$$
and $\omega = \omega_1 \cup \omega_2$.
Then $X_\omega$ has zero entropy but has infinite directional entropy in all directions.
\end{example}

\begin{proof}
To prove both claims, let us find $P_\omega(k, w)$ for arbitrary $k, w \in \N$. 
Note that for any $\eta \in X_\omega$, either $\im \eta \subset \{0,1\}$ or $\im \eta \subset \{2,3\}$.
We first consider the former case.
Then for any $(i,j) \in \Z^2$, if $\eta(i,j) = 0$ then $\eta(i,j') = 0$ for all 
$j'\ge j$. Of course it follows also that if $\eta(i,j) = 1$ then $\eta(i,j') = 1$ for all 
$j'\le j$. Hence $\eta |_{\{i\} \times [1,w]}$ is completely determined by 
$\min\{1 \le j \le w : u(i,j) = 0\}$, where this min is taken to be $0$ if the set is empty.
Furthermore, if $\alpha_i \colon \{1,\dots,w\} \to \{0,1\}$ satisfies, for some $j_i \in \{1, \dots, w+1\}$,
$\alpha_i(j) = 0$ for $j \ge j_i$ and $\alpha_i(j) = 1$ for $j \le j_i - 1$,
then it is clear from the definition of $\omega$ that
$\alpha(i,j) = \alpha_i(j)$ is an $\omega$-legal coloring of $[1,k]\times [1,w]$.
Similarly, if $\im \eta = \{2, 3\}$ then $\eta |_{[1,k]\times[1,w]}$ is determined by
$\min\{1 \le i \le k : \eta(i,j) = 2\}$ for $1\le j \le w$. Thus,
$P_\omega(k, w) = (w+1)^k + (k+1)^w$.
From this it follows that
$$h(X_\omega) = \lim_{k \to \infty} \dfrac{\log(P_\omega(k, k))}{k^2} = 
	 \lim_{k \to \infty} \dfrac{\log(2(k+1)^k)}{k^2}
	 =  \lim_{k \to \infty} \left[\dfrac{\log 2}{k^2} + \dfrac{\log(k+1)}{k}\right] = 0.$$
We also obtain
$$h_{\mathbf{e}_1}(X_\omega) = \sup_{w \in \N} \lim_{k \to \infty} \dfrac{\log(P_\omega(k, w))}{k} 
			\ge \sup_{w\in \N} \lim_{k \to \infty} \dfrac{\log(w+1)^k}{k}
			= \sup_{w\in\N} \log(w+1) = \infty$$
and similarly
$$h_{\mathbf{e}_2}(X_\omega) = \sup_{w \in \N} \lim_{k \to \infty} \dfrac{\log(P_\omega(w, k))}{k} 
			\ge \sup_{w\in \N} \lim_{k \to \infty} \dfrac{\log(w+1)^k}{k}
			= \sup_{w\in\N} \log(w+1) = \infty.$$
If $\mathbf{v}$ is some unit vector not parallel to $\mathbf{e}_2$, then there
is a constant $c > 0$ such that if $\mathcal{L}_k$
is the line segment connecting $-\frac{k}{2}\mathbf{v}$ to $\frac{k}{2}\mathbf{v}$, we have
that $\mathcal{L}_k^{(w)} \cap \Z^2$ consists of at least $ck$ vertical lines of length at least $w$
and hence
$$h_{\mathbf{v}}(X_\omega) = \sup_{w \in \N} \lim_{k \to \infty} 
	\dfrac{\log(P_\omega(\mathcal{L}_k^{(w)}))}{k} \ge 
	\sup_{w \in \N} \lim_{k \to \infty} \dfrac{\log((w+1)^{ck})}{k} 
	= \sup_{w\in \N} c\log(w+1) = \infty.$$
\end{proof}

 \ignore{
\begin{prop}
For $n =1$, if $\alpha < |\mathcal{A}|^{-2}$, 
then $\displaystyle\lim_{n\to\infty} \mu\{\omega : h(X_\omega) > 0\} = 0$.
\end{prop}

{\bf 
This needs a tweak. Concept of minimal should be replaced with just taking two distinct $u$-connectors
should that no other $u$-connector is strictly smaller than either one. If a pattern repeats in one of these
it just means that the other is essentially a truncation of it, and can reason from there. Otherwise, 
the non-repeating property holds. (As the proof is now this non-repeating property is used
but actually doesn't hold exactly because of this truncation example.)
}

\begin{proof}
Given $u \in \mathcal{A}^{[1,n]}$ and an integer $\ell \ge n+1$, 
say that a coloring $f : [1,\ell] \to \mathcal{A}$ is a $u$-connector of length $\ell$
if 
$f|_{[1,n]} = f|_{[\ell - n + 1, \ell]} = u$.
We say that it is a minimal $u$-connector if there does not exist $n+1 \le \ell' < \ell$ such that
$f|_{[1,\ell']}$ is also a $u$-connector. 
Note that by the minimality assumption and the pigeonhole principle, it follows
in this case that $\ell \le n+ |\mathcal{A}|^n$.
Given two distinct minimal $u$-connectors $f_1$, $f_2$ with lengths $\ell_1, \ell_2$, let 
$\Omega_{f_1, f_2}$ be the set of $\omega$ such that 
for $k = 1, 2$ and for every $0 \le j \le \ell_k-n$, $f_k|_{[j+1, j+n]} \in \omega$. 
That is, $\Omega_{f_1, f_2}$ is the set of 
$\omega$ such that every block of length $n$ appearing in $f_1$ or $f_2$ is legal.
We claim that $\bigcup_{f_1, f_2} \Omega_{f_1, f_2} \supseteq \{\omega : h(X_\omega) > 0\}$
and that $\mu\left( \bigcup_{f_1, f_2} \Omega_{f_1, f_2}\right)$ tends to zero as $n \to \infty$.

First, let $\omega \in \Omega$ such that $\omega \not\in  \Omega_{u, f_1, f_2}$ for any 
$u \in \mathcal{A}^{[1,n]}$ and any
minimal $u$-connectors $f_1$ and $f_2$. Let $\eta \in X_\omega$.
It follows that for any $v \in \omega$, there is only one minimal $v$-connector, 
say $f$. Let $\ell$ be the length of $f$. Then
if $\eta|_{[j+1, j+n]} = v$, we have either $\eta|_{[j+1, j+\ell]} = f$
or $\eta|_{[j'+1, j'+n]} \neq v$ for any $j' > j$.
Similarly, either $\eta|_{[j+n - \ell + 1, j+n]} = f$
or $\eta|_{[j'+1, j'+n]} \neq v$ for any $j' < j$.
It follows that if we let $J_1 = \inf\{j : v = \eta|_{[j+1, j+n]}\} \in \Z \cup \{-\infty\} \cup \{\infty\}$
and $J_2 = \sup\{j : v = \eta|_{[j+1, j+n]}\} \in \Z \cup \{-\infty\} \cup \{\infty\}$,
then $\eta$ is periodic on $(J_1, J_2)$ with period $\ell - n \le |\mathcal{A}|^n$.
Since this holds for all $v \in \omega$ and $|\omega| \le |\mathcal{A}|^n$, it follows that
there exist intervals $I_1, \dots, I_M$ with $M \le |\mathcal{A}|^n$ such that
$\cup I_m = \Z$ and $\eta|_{I_m}$ is periodic
with period at most $|\mathcal{A}|^n$ for each $1 \le m \le M$. 
Since $\eta \in X_\omega$ was arbitrary, we
see that $P_\omega(k)$ is bounded independent of $k$ and therefore
$h(X_\omega) = 0$.

Now, for every $\omega \in \bigcup_{f_1, f_2} \Omega_{f_1, f_2}$, we have that for some 
$u \in \mathcal{A}^{[1,n]}$ and some distinct minimal $u$-connectors $f_1$ and $f_2$,
$\omega \supset \omega_{f_1, f_2}$, where 
$$\omega_{f_1, f_2} = \{v : v = f_i|_{[j+1, j+n]}\text{ for some }i \in \{1,2\}\text{ and }0 \le j \le \ell_i-n\}.$$
We first claim that $\max(\ell_1, \ell_2) \ge n + \sqrt{n}/2$.
Indeed, suppose $\ell_i = n + p_i$ with $p_i < \sqrt{n}/2$. 
Then 
$f_i(j + p_i) = f_i(j)$ for $1 \le j \le n$.
Since the $f_i$ are distinct it follows that $p_1 \neq p_2$.
And since they are minimal, we must have $p_1 \not\in p_2\Z$ and $p_2 \not\in p_1\Z$,
and hence $\text{gcd}(p_1, p_2) < \min(p_1, p_2)$.
Without loss of generality assume $p_1 < p_2$.
By Bezout's identity, there exist integers $x$ and $y$ with $\max(|x|, |y|) < \sqrt{n}/2$
such that $xp_1 + yp_2 = \text{gcd}(p_1,p_2)$. Since $|xp_1|$ and $|yp_2|$ are less than $n/2$,
it follows that for each $1 \le k \le p_1$, 
$$u(k) = u(k + xp_1 + yp_2) = u(k + \text{gcd}(p_1,p_2))$$
Since $u$ is periodic with period $p_1$ it follows that
$u$ is periodic with period $\text{gcd}(p_1, p_2) < p_1$ contradicting the minimality of $f_1$.

Note that there are at most $|\mathcal{A}|^{\ell_1 + \ell_2 - 2n}$ colorings of 
$[1,\ell_1 - (n-1)] \sqcup [1, \ell_2 - (n-1)]$,
and hence at most this many pairs $(f_1, f_2)$ of minimal $u$-connectors 
of length $\ell_1, \ell_2$ for $u \in \mathcal{A}^{[1,n]}$.
By the minimality assumption for each such pair there are at least $\max(\ell_1 - n, \ell_2 - n)$
distinct colorings $v \in \mathcal{A}^{[1,n]}$ such that $v \in \omega_{f_1, f_2}$.
Hence, 
$$\mu\left\{ \omega \colon \begin{array}{l}\omega \supset \omega_{f_1, f_2}\text{ for some pair of distinct }\\
				u\text{-connectors }f_1, f_2
	\text{ with lengths }\ell_1, \ell_2
	\end{array}\right\}
	\le |\mathcal{A}|^{\ell_1 + \ell_2 - 2n}\alpha^{\frac{\ell_1 + \ell_2 - 2n}{2}}.$$
Thus, $$\mu\left( \bigcup_{f_1, f_2} \Omega_{f_1, f_2} \right)
			\le \sum_{\ell_1, \ell_2 = n+\sqrt{n}}^\infty (|\mathcal{A}|\sqrt{a})^{\ell_1+\ell_2 - 2n}
				= \sum_{\ell_1', \ell_2' = 1}^\infty (|\mathcal{A}|\sqrt{a})^{\sqrt{n} + \ell_1' + \ell_2'}.$$
Since $|\mathcal{A}|\sqrt{a} < 1$, we have 
$ \sum_{\ell_1', \ell_2' = 1}^\infty (|\mathcal{A}|\sqrt{a})^{ \ell_1' + \ell_2'} = K < \infty$, so
$$\mu\left( \bigcup_{f_1, f_2} \Omega_{f_1, f_2} \right)	 \le K (|\mathcal{A}|\sqrt{a})^{\sqrt{n}} \to 0.$$
\end{proof}

\newpage
}

Fix $\mathcal{A}$ and let $\alpha_d = \alpha(d, |\mathcal{A}|)$ be as in 
Theorem~\ref{aperiodic}. We ask the following.

\begin{question}
For $\alpha_d \le \alpha < \frac{1}{|\mathcal{A}|}$, what is
\begin{itemize}
	\item $\displaystyle\lim_{n\to \infty} \mu_{\alpha, n}\{\omega \colon h(X_\omega) = 0\}$?
	\item $\displaystyle\lim_{n\to \infty} \mu_{\alpha, n}\{\omega \colon h_V(X_\omega) =
			 0\text{ for some }V\}$?
	\item $\displaystyle\lim_{n\to \infty} \mu_{\alpha, n}\{\omega \colon h_V(X_\omega) 
				= 0\text{ for all }V\}$?
	\item $\displaystyle\lim_{n\to \infty} \mu_{\alpha, n}\{\omega \colon h_V(X_\omega) 
			= \infty\text{ for some }V\}$?
	\item $\displaystyle\lim_{n\to \infty} \mu_{\alpha, n}\{\omega \colon h_V(X_\omega) 
			= \infty\text{ for all }V\}$?
	\item $\displaystyle\lim_{n\to \infty} \mu_{\alpha, n}\{\omega \colon |X_\omega| < \infty\}$?
	\item $\displaystyle\lim_{n\to \infty} \mu_{\alpha, n}\{\omega \colon 
				X_\omega\text{ does not contain any infnite orbits}\}$?
	\item $\displaystyle\lim_{n\to \infty} \mu_{\alpha, n}\{\omega \colon\text{ every element of } 
				X_\omega\text{ has at least one period vector}\}$?
\end{itemize}
\end{question}

Another natural question concerns the critical value of the parameter $\alpha_0 = 1/|\mathcal{A}|$ with $d = 1$.
Neither our result nor the results in \cite{M} include this case, so it is natural to ask the following.

\begin{question}
For $d = 1$ and $\alpha = \frac{1}{|\mathcal{A}|}$, what is
\begin{itemize}
	\item $\displaystyle\lim_{n\to \infty} \mu_{\alpha, n}\{\omega \colon h(X_\omega) = 0\}$?
	\item $\displaystyle\lim_{n\to \infty} \mu_{\alpha, n}\{\omega \colon |X_\omega| < \infty\}$?
\end{itemize}
\end{question}
Note that in several places in the proof of the $d=1$ case of Theorem \ref{aperiodic}, 
we use the fact that $(\alpha |\mathcal{A}|)^n$ (and even $n (\alpha |\mathcal{A}|)^n$)
approaches $0$ as $n \to \infty$, so the present methods do not seem to yield any information in the case $\alpha = 1/|\mathcal{A}|$.

\section*{Acknowledgements} The author thanks the anonymous referee for several helpful comments.


\begin{thebibliography}{22}





\bibitem{L}{\sc F.~Ledrappier}. Un champ markovien puet \^etre d'entropie nulle et m\'elangeant. 
		{\em C. R. Acad. Sc. Paris} {\bf 287} (1978) 561-563.

\bibitem{M}{\sc K.~McGoff}. Random subshifts of finite type. {\em Annals of Probability}, {\bf 40}, no. 2 (2012) 648-694.


\bibitem{MP}{\sc K.~McGoff \& R.~Pavlov}. Random $\Z^d$-shifts of finite type. arxiv:1408.4086.

\bibitem{Mi}
{\sc J.~Milnor.}
On the entropy geometry of cellular automata. 
{\em Complex Systems} {\bf 2} (1988), no. 3, 357--385.





\end{thebibliography}
\end{document}